\documentclass[a4paper,reqno]{amsart}%
\usepackage{amsfonts}
\usepackage{amsmath}
\usepackage{amssymb}
\usepackage{amscd}
\usepackage{graphicx}
\usepackage{fancyhdr}
\usepackage{color}
\usepackage{graphicx}
\usepackage{gloss}

\usepackage{mathtools}               

\usepackage[normalem]{ulem}

\usepackage{tikz}                  

\usepackage{graphicx} 
\usepackage{float} 
\usepackage{subfigure} 

\usepackage{algorithm}
\usepackage{algorithmic}
\usepackage{float}
\usepackage{lipsum}
\makeatletter
\newenvironment{breakablealgorithm}
{
	\begin{center}
		\refstepcounter{algorithm}
		\hrule height.8pt depth0pt \kern2pt
		\renewcommand{\caption}[2][\relax]{
			{\raggedright\textbf{\ALG@name~\thealgorithm} ##2\par}%
			\ifx\relax##1\relax 
			\addcontentsline{loa}{algorithm}{\protect\numberline{\thealgorithm}##2}%
			\else 
			\addcontentsline{loa}{algorithm}{\protect\numberline{\thealgorithm}##1}%
			\fi
			\kern2pt\hrule\kern2pt
		}
	}{
		\kern2pt\hrule\relax
	\end{center}
}
\makeatother

\pagestyle{fancy}

\pagestyle{headings}

\allowdisplaybreaks[4]      


\input xy
\xyoption{all}

\newtheorem{theorem}{Theorem}

\newtheorem*{main theorem}{Main Theorem}

\newtheorem{case}{Case}
\newtheorem{claim}{Claim}
\newtheorem{claimproof}{Proof of claim}

\newtheorem{conjecture}[theorem]{Conjecture}

\newtheorem{lemma}[theorem]{Lemma}

\newtheorem{proposition}[theorem]{Proposition}

\begin{document}
\title[Lower bound for the simplicial volume]{Lower Bound for the Simplicial Volume of Closed Manifolds Covered by $\mathbb{H}^{2}\times\mathbb{H}^{2}\times\mathbb{H}^{2}$}

\author{Xiaofeng Meng}
\address{School of Mathematical Sciences, Fudan University, Shanghai 200433, China}
\email{xfmeng17@fudan.edu.cn}

\subjclass{Primary 53C20; Secondary 55N10}
\date{2021/3/14}
\begin{abstract}
We estimate the upper bound for the $\ell^{\infty}$-norm of the volume form on $\mathbb{H}^2\times\mathbb{H}^2\times\mathbb{H}^2$ seen as a class in $H_{c}^{6}(\mathrm{PSL}%
_{2}\mathbb{R}\times\mathrm{PSL}_{2}\mathbb{R}\times\mathrm{PSL}_{2}\mathbb{R};\mathbb{R})$. This %
gives the lower bound for the simplicial volume of closed Riemennian manifolds covered by $\mathbb{H}^{2}\times\mathbb{H}^{2}\times\mathbb{H}^{2}$. The proof of these facts yields an algorithm to compute the lower bound of closed Riemannian manifolds covered by $\big(\mathbb{H}^2\big)^n$.
\end{abstract}
\maketitle

\section{Introduction}

The simplicial volume is a topological invariant of manifolds introduced by Gromov (\cite{gromov1982volume}) and %
Thurston (\cite{thurston1978geometry}). For an oriented closed connected $n$-dimensional manifold $M$, $H_{n}(M,\mathbb{Z})$ is generated by the %
fundamental class $[M]_{\mathbb{Z}}$ of $M$. Denote the image of %
$[M]_{\mathbb{Z}}$ via the change of coefficients map $H_{n}(M,\mathbb{Z})\hookrightarrow H_{n}(M,\mathbb{R})$ by $[M]\in H_{n}(M,\mathbb{R})$. Then the simplicial volume of $M$ is defined by%

\[
\|M\|=\mathrm{inf} \left \{ \sum_{i}|a_{i}|\ \middle | \  [\sum_{i}a_{i}\sigma_{i}  ]=[M]\in %
H_{n} (M,\mathbb{R}  ) \right \}.
\]
Moreover, simplicial volume can also be defined for a non-orientable closed connected manifold $M$. Let $\widehat{M}$ be the oriented double covering of $M$. Then the simplicial volume of $M$ is defined by $\|M\|=\|\widehat{M}\|/2$. 

Although several vanishing and non-vanishing results for the simplicial volume are presented now, the exact value of %
non-vanishing simplicial volume has only been calculated for a few cases. Those include closed hyperbolic manifolds (\cite{gromov1982volume}, \cite{thurston1978geometry}), Hilbert modular surfaces (\cite{loh2007simplicial}) and closed manifolds covered by $\mathbb{H}^2\times\mathbb{H}^2$ (\cite{bucher2008simplicial}). 

For the simplicial %
volume of products,  we have the following result by \cite{gromov1982volume} for closed manifolds $M$ and $N$ with dimension $m$ and $n$, respectively.
\begin{equation}\label{equ: simplicial volume of product of manifolds}
    \|M\|\cdot\|N\| \leq \|M\times N\| \leq \binom{n+m}{n}\|M\|\cdot\|N\|.
\end{equation}
When $M$ and $N$ are closed %
surfaces, it has been proved in \cite{bucher2008simplicial} that
\begin{equation}\label{equ: simplicial volume of surfaces}
    \|M\times N\|=\frac{3}{2}\|M\|\cdot\|N\|.
\end{equation}
Combining (\ref{equ: simplicial volume of product of manifolds}) and (\ref{equ: simplicial volume of surfaces}), we get
\[
\dfrac{3}{2}\prod_{i=1}^3\|M_i\|\leq\Bigg\|\prod_{i=1}^3 M_i\Bigg\|\leq\dfrac{45}{2}\prod_{i=1}^3\|M_i\|,
\]
for closed surfaces $M_i$, $i=1,2,3$. 
In this paper we improve the above estimation of the lower bound to the following one and furthermore, offer an algorithm to compute the lower bound of closed Riemannian manifolds covered by $ (\mathbb{H}^2  )^n$.

\begin{theorem}
\label{Thm:product of surfaces}Let $M_{i}$ be a closed surface, where $i=1,2,3$. Then %
\[
\Bigg\|\prod_{i=1}^3M_{i}\Bigg\| \geq \frac{45}{11}\prod_{i=1}^3 \|M_i  \|.
\]
\end{theorem}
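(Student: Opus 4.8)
The plan is to use the duality between simplicial volume and bounded cohomology to reduce the theorem to the upper bound on the $\ell^\infty$-seminorm of the volume class announced in the abstract. If some $M_i$ has genus $\le 1$ then $\|M_i\|=0$ and the inequality is trivial, so I may assume every $M_i$ carries a hyperbolic metric and $\prod_i M_i$ is covered by $(\mathbb{H}^2)^3$. By Gauss--Bonnet $\mathrm{Vol}(M_i)=\pi\|M_i\|$, hence $\mathrm{Vol}(\prod_i M_i)=\pi^3\prod_i\|M_i\|$. Let $\omega_{\mathrm{vol}}$ denote the Riemannian volume form of $(\mathbb{H}^2)^3$, viewed via van Est as a class in $H^6_c(G;\mathbb{R})$ with $G=(\mathrm{PSL}_2\mathbb{R})^3$; by Gromov's boundedness it lifts to $H^6_{cb}(G;\mathbb{R})$. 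Its image in $H^6_b(\prod_i M_i;\mathbb{R})$ evaluates on the fundamental class to $\mathrm{Vol}(\prod_i M_i)$ and has seminorm at most $\|\omega_{\mathrm{vol}}\|_\infty$, so the duality principle yields
\[
\Big\|\prod_i M_i\Big\|\ \geq\ \frac{\mathrm{Vol}(\prod_i M_i)}{\|\omega_{\mathrm{vol}}\|_\infty}\ =\ \frac{\pi^3}{\|\omega_{\mathrm{vol}}\|_\infty}\prod_i\|M_i\|.
\]
Thus it suffices to prove $\|\omega_{\mathrm{vol}}\|_\infty\le\tfrac{11}{45}\pi^3$.

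To estimate $\|\omega_{\mathrm{vol}}\|_\infty$ I would work in the Burger--Monod boundary model, computing $H^\bullet_{cb}(G)$ from the complex of $G$-invariant alternating $L^\infty$ functions on $(\partial\mathbb{H}^2)^{\bullet+1}$, where $\partial\mathbb{H}^2=S^1$ is taken in each of the three factors. The natural map from this complex to the standard one does not increase seminorms in the direction we need, so any boundary representative of $\omega_{\mathrm{vol}}$ bounds its seminorm from above. Now $\omega_{\mathrm{vol}}$ is the cross product $\omega_1\times\omega_2\times\omega_3$ of the three area classes, and each $\omega_i\in H^2_{cb}(\mathrm{PSL}_2\mathbb{R})$ is represented by $\pi$ times the orientation cocycle $\mathrm{Or}\colon(S^1)^3\to\{-1,0,1\}$. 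Hence, by cupping the three factor cocycles in the homogeneous resolution, a representative of $\omega_{\mathrm{vol}}$ is the antisymmetrization over $x_0,\dots,x_6\in S^1\times S^1\times S^1$ of
\[
\pi^3\,\mathrm{Or}(x_0^1,x_1^1,x_2^1)\,\mathrm{Or}(x_2^2,x_3^2,x_4^2)\,\mathrm{Or}(x_4^3,x_5^3,x_6^3).
\]

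The core of the argument --- and the step I expect to be the main obstacle --- is to bound the seminorm of this alternated cocycle $c$. Since $\mathrm{Or}$ depends only on the cyclic order of three points and $G$ preserves the cyclic order in each circle, $c$ descends to a function on triples of cyclic orderings of seven points, a finite object with at most $(6!)^3$ arguments. Computing $\inf_\psi\|c-\delta\psi\|_\infty$ over $G$-invariant alternating coboundaries is then a finite linear program whose optimum is the desired seminorm bound; exhibiting an explicit near-optimal correcting cochain $\psi$ certifies $\|\omega_{\mathrm{vol}}\|_\infty\le\tfrac{11}{45}\pi^3$, in exact parallel with the value $\tfrac{2}{3}\pi^2$ for $\mathbb{H}^2\times\mathbb{H}^2$ in Bucher--Karlsson. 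The genuine difficulties are the combinatorial explosion, which forces a careful symmetry reduction (using the dihedral symmetry of each circle and the $S_7$-antisymmetry of $c$) before the program becomes solvable; running that program in exact arithmetic to recover the rational optimum $11/45$; and checking that the boundary representative really pairs with the fundamental class to give $\mathrm{Vol}(M)$, so that the duality estimate applies with the stated constant. The same procedure with $n$ circles and $2n+1$ boundary points is the promised algorithm for manifolds covered by $(\mathbb{H}^2)^n$.
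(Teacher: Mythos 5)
Your framework is exactly the paper's: dispose of the case where some $M_i$ has non-negative Euler characteristic, use Gauss--Bonnet and the proportionality principle to reduce everything to the bound $\|\mathcal{J}(\omega_{\mathbb{H}^2\times\mathbb{H}^2\times\mathbb{H}^2})\|_\infty\le\frac{11}{45}\pi^3$, pass to the amenable boundary model on $(\mathbb{S}^1)^3$, and represent the class by the alternation of the cup product of the three orientation cocycles. Up to that point the proposal is sound and coincides with Sections 1--2 of the paper.

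The gap is that the step you yourself flag as the main obstacle is the entire mathematical content of the paper (Proposition \ref{Prop: norm of Theta}, i.e.\ Lemmas \ref{norm when donnot have same componet for x,y,z}--\ref{x4points} and Algorithm \ref{alg for outcome}), and your sketch of how to carry it out deviates from what is actually needed in two respects. First, no correcting cochain is required: the paper proves the pointwise bound $\|\Theta\|_\infty=\frac{11}{45}$ for the alternated cocycle itself, so the upper bound follows with $\psi=0$; the linear program $\inf_\psi\|c-\delta\psi\|_\infty$ you propose is a far larger computation than necessary, and the infimum it computes is the seminorm of the class rather than an upper bound you can certify by a single evaluation. (The maximum $\frac{11}{45}$ is attained at the explicit configuration where the seven points on the $k$-th circle are the $7$-th roots of unity traversed with speed $k$; cf.\ Figure \ref{fig:maximal value 3}.) Second, reducing the sup to ``triples of cyclic orderings of seven points'' only covers configurations in which the seven boundary points in each circle are pairwise distinct. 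Degenerate configurations do not obviously give smaller values: coincidences kill some of the $\pm1$ summands in the average over $7!$ permutations, but the surviving terms could a priori be better aligned in sign and push the average above $\frac{11}{45}$. The paper devotes two lemmas to excluding this, via a counting-plus-cancellation argument (Claim \ref{2/3claim}) and further runs of the algorithm on multisets of coordinates. Without both of these repairs --- replacing the coboundary optimization by a direct sup-norm evaluation, and treating the degenerate configurations --- the constant $\frac{11}{45}$ is not established and the theorem does not follow from your outline.
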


Recall the proportionality principle by Gromov in \cite{gromov1982volume}. Let $M$ be an $n$-dimensional closed Riemannian manifold. Then
\begin{equation}\label{equ: Gromov's proportionality principle}
    \|M\|=\dfrac{\mathrm{Vol(M)}}{\|[\omega_{\widetilde{M}}]_c^G\|_{\infty}},
\end{equation}
where $\widetilde{M}$ is the universal covering of $M$, $G$ is the group of orientation-preserving isometries of $\widetilde{M}$, $\omega_{\widetilde{M}}$ is the volume form of $\widetilde{M}$ and $[\omega_{\widetilde{M}}]_c^G$ is the volume class of $\widetilde{M}$ viewed as a class in $H^n(C_c^*(\widetilde{M})^G)$. According to \cite{thurston1978geometry}, the constant $\|[\mathrm{Vol}_{\widetilde{M}}]_c^G\|_{\infty}$ is $\pi$ for a closed oriented surface $M$ supporting a hyperbolic structure. According to \cite{gromov1982volume}, the simplicial volume of a closed connected manifold admitting a self-map of non-trivial degree (i.e., not equal to $-1$, $0$, or $1$) vanishes. This implies that the simplicial volume of a closed surface covered by $2$-sphere or torus vanishes. Therefore, Theorem \ref{Thm:product of surfaces} naturally holds when one of the universal coverings of $M_1$, $M_2$ and $M_3$ is $2$-sphere or Euclidean plane. To prove Theorem \ref{Thm:product of surfaces}, we only need to consider the case where the universal covering of $M_1\times M_2\times M_3$ is $\mathbb{H}^2\times\mathbb{H}^2\times\mathbb{H}^2$. Hence the following Theorem proves Theorem \ref{Thm:product of surfaces}.
\begin{theorem}
\label{Thm: simpl vol covered by HxHxH}Let $M$ be a closed Riemannian manifold whose universal covering is $\mathbb{H}^{2}\times\mathbb{H}^{2}\times\mathbb{H}^{2}$. Then 

\[
 \Vert M  \Vert
 \geq \frac{45}{11\pi^3}\mathrm{Vol} (M  ).%
\]

\end{theorem}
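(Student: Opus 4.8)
The plan is to invoke Gromov's proportionality principle (\ref{equ: Gromov's proportionality principle}). Taking $\widetilde{M}=\mathbb{H}^{2}\times\mathbb{H}^{2}\times\mathbb{H}^{2}$ and $G=\mathrm{PSL}_{2}\mathbb{R}\times\mathrm{PSL}_{2}\mathbb{R}\times\mathrm{PSL}_{2}\mathbb{R}$ for its group of orientation-preserving isometries, we have $\|M\|=\mathrm{Vol}(M)/\bigl\|[\omega_{\widetilde{M}}]_c^G\bigr\|_{\infty}$, so the theorem is equivalent to the purely cohomological estimate
\[
\bigl\|[\omega_{\widetilde{M}}]_c^G\bigr\|_{\infty}\ \le\ \frac{11\pi^{3}}{45}.
\]
Since the left-hand side is an infimum of sup-norms over cocycles representing the volume class, it suffices to exhibit a single $G$-invariant bounded cocycle representing that class whose sup-norm is at most $11\pi^{3}/45$. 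Thus the problem becomes one of constructing a good representative.

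To construct it I would pass to the boundary $\partial\widetilde{M}=(S^{1})^{3}$ and use the standard identification of the continuous (bounded) cohomology of $G$ with the cohomology of the complex of $G$-invariant bounded alternating functions on $\bigl((S^{1})^{3}\bigr)^{\bullet+1}$. On each hyperbolic factor the area class is represented by the orientation cocycle $\mathrm{or}_{i}\colon(S^{1})^{3}\to\{-1,0,1\}$, normalized by the fact that every ideal triangle has area $\pi$; hence the volume class is represented by the cup product $\pi^{3}\,\mathrm{or}_{1}\cup\mathrm{or}_{2}\cup\mathrm{or}_{3}$, a $6$-cocycle evaluated on seven-tuples of boundary points, with naive sup-norm $\pi^{3}$. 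The entire task is then to lower this norm by subtracting a suitable $G$-invariant coboundary $\delta\beta$, i.e.\ to show $\bigl\|\pi^{3}(\mathrm{or}_{1}\cup\mathrm{or}_{2}\cup\mathrm{or}_{3})-\delta\beta\bigr\|_{\infty}\le 11\pi^{3}/45$.

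The reduction to a finite problem is the core of the argument, and it is what furnishes the promised algorithm. Because each $\mathrm{or}_{i}$ depends only on the cyclic order of the three relevant points on the $i$-th circle, the cup product, its alternation, and every invariant coboundary built from lower cochains are constant on each combinatorial \emph{type} of a configuration of the seven points, namely on each triple of cyclic orderings induced on the three circles. I would first replace the cup product by its alternation, which represents the same class, does not increase the sup-norm, and has values determined by these finitely many types; then I would parametrize the admissible corrections $\delta\beta$ by the finitely many values of a $G$-invariant alternating $5$-cochain $\beta$. Minimizing the maximal absolute value of the adjusted cocycle over these parameters is a finite linear program, and carrying it out (the algorithm, valid verbatim for $(\mathbb{H}^{2})^{n}$) should produce an explicit $\beta$ for which this maximum equals $11/45$, giving the claimed bound after restoring the factor $\pi^{3}$.

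The main obstacle I anticipate is the combinatorial size and the bookkeeping of this linear program: there are many cyclic-ordering types of seven points distributed over three circles, the alternation must be assembled with the correct signs, and one must verify both that the chosen $\beta$ is a genuine invariant bounded cochain—so that the modified cocycle still represents the volume class—and that no configuration type exceeds $11/45$. Pinning down the exact optimal value would in addition require a matching lower estimate, but for the desired inequality on $\|M\|$ only the upper bound $11/45$ on the optimal sup-norm is needed, so I would concentrate on writing down the optimal $\beta$ explicitly and checking the inequality on every type.
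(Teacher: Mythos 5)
Your overall strategy is the paper's: Gromov's proportionality principle reduces the theorem to the bound $\|\mathcal{J}(\omega_{\mathbb{H}^2\times\mathbb{H}^2\times\mathbb{H}^2})\|_\infty\le\frac{11}{45}\pi^3$ (Theorem \ref{Thm: norm of omega}), one passes to the boundary resolution $C_b^*(\mathbb{T}^3,\mathbb{R})^{G}$ (isometric by Monod's result since the stabilizer $P$ is amenable), and one represents the class by $\pi^3$ times the alternation of $\mathrm{or}_1\cup\mathrm{or}_2\cup\mathrm{or}_3$. But there are two genuine problems with how you finish. First, your reduction to a \emph{finite} linear program rests on the claim that every $G$-invariant coboundary $\delta\beta$ is constant on cyclic-order types of the point configuration. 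That is false: $\mathrm{PSL}_2\mathbb{R}$ is only triply transitive on $S^1$, so six boundary points with the same cyclic order can lie in different orbits (there are cross-ratio moduli), and a $G$-invariant $5$-cochain $\beta$ need not factor through order types. Your program is therefore only an optimization over the subclass of order-type cochains, which is fine for an upper bound but does not parametrize ``the admissible corrections'' as you assert. In fact the paper shows no correction is needed at all: the alternated cocycle $\Theta$ itself satisfies $\|\Theta\|_\infty=\frac{11}{45}$ (Proposition \ref{Prop: norm of Theta}), so you should simply take $\beta=0$.

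Second, and more seriously, the entire content of the proof is the verification that the supremum over configurations is $\frac{11}{45}$, and your proposal defers this to an algorithm that ``should produce'' the answer. This is not a bookkeeping afterthought. The paper's argument splits into a generic case (all seven points distinct on each circle, Lemma \ref{norm when donnot have same componet for x,y,z}, handled by enumerating $360\times360$ order types after normalizing the $x$-order and using rotation/reflection invariance) and several degenerate cases where points coincide on some circle (Lemmas \ref{x3points} and \ref{x4points}); the degenerate cases are not subsumed by the generic enumeration and require separate counting and cancellation arguments (e.g.\ Claim \ref{2/3claim}) to show the value stays strictly below $\frac{11}{45}$. Your phrase ``check the inequality on every type'' silently assumes all types can be enumerated uniformly, but configurations with coincident points form their own strata and must be treated as such. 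Until that case analysis is actually carried out, the bound $\frac{11}{45}$ is unproved and the theorem does not follow.
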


To prove Theorem \ref{Thm:product of surfaces} and \ref{Thm: simpl vol covered by HxHxH}, we recall a way to compute the proportionality constant $\frac{\|M\|}{\mathrm{Vol}(M)}$ provided by Bucher-Karlsson in \cite{bucher2008simplicial} for closed locally symmetric spaces of noncompact type. Let $M$ be an $n$-dimensional closed locally symmetric space of non-compact type. We use the same notations as in (\ref{equ: Gromov's proportionality principle}). Let $\mathcal{J}$ be the Van Est isomorphism mapping from $ A^n(\widetilde{M})^G$ to $H_c^n(G,\mathbb{R})$. Then
\[
\|M\|=\dfrac{\mathrm{Vol}(M)}{\|\mathcal{J}\big(\omega_{\widetilde{M}}\big)\|_{\infty}}.
\]
We establish the desired inequality as following.
\begin{theorem}
\label{Thm: norm of omega}Let $\omega_{\mathbb{H}^{2}\times\mathbb{H}^{2}\times\mathbb{H}^{2}}\in %
A^{6}(\mathbb{H}^{2}\times\mathbb{H}^{2}\times\mathbb{H}^{2},\mathbb{R})$ be the Riemannian volume %
form on $\mathbb{H}^{2}\times\mathbb{H}^{2}\times\mathbb{H}^{2}$. Let $\mathcal{J}$ be the Van Est %
isomorphism mapping from $A^{6}(\mathbb{H}^{2}\times\mathbb{H}^{2}\times\mathbb{H}^{2},\mathbb{R})$ to $H_{c}^{6}(\mathrm{PSL}_{2}\mathbb{R}\times\mathrm{PSL}_{2}\mathbb{R}\times\mathrm{PSL}_{2}\mathbb{R},\mathbb{R})$. Then 
\[
 \Vert \mathcal{J}(\omega_{\mathbb{H}^{2}\times\mathbb{H}^{2}\times\mathbb{H}^{2}})  \Vert _{\infty
}\leq\frac{11}{45}\pi^3.
\]
\end{theorem}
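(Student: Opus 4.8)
The plan is to prove the upper bound by exhibiting one explicit bounded cocycle representing $\mathcal{J}(\omega_{\mathbb{H}^{2}\times\mathbb{H}^{2}\times\mathbb{H}^{2}})$ and estimating its supremum norm; since the seminorm $\|\cdot\|_{\infty}$ on $H_{c}^{6}$ is an infimum over representatives, any single representative furnishes an upper bound, which is all that is required. I would work in the model of continuous cohomology given by $G$-invariant alternating $L^{\infty}$ functions on tuples of points of the Furstenberg boundary $\partial_{\infty}(\mathbb{H}^{2}\times\mathbb{H}^{2}\times\mathbb{H}^{2})=S^{1}\times S^{1}\times S^{1}$, where the supremum norm of a cocycle dominates the seminorm of its class. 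The first step is to recall the classical single-factor computation: under the Van Est map the hyperbolic area class of $\mathbb{H}^{2}$ is represented by $\pi\cdot\mathrm{Or}$, where $\mathrm{Or}(\xi_{0},\xi_{1},\xi_{2})\in\{-1,0,1\}$ is the orientation cocycle recording the cyclic order of three points on $\partial\mathbb{H}^{2}=S^{1}$, the constant $\pi$ being the maximal area of an ideal triangle (consistent with the value $\pi$ quoted above for a hyperbolic surface).

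Since $\omega_{\mathbb{H}^{2}\times\mathbb{H}^{2}\times\mathbb{H}^{2}}=\omega_{1}\wedge\omega_{2}\wedge\omega_{3}$ and the Van Est isomorphism is multiplicative, $\mathcal{J}(\omega)$ is the cross product of the three factor classes, represented in the boundary model by the cup product cocycle
\[
c(p_{0},\dots,p_{6})=\pi^{3}\,\mathrm{Or}(\xi_{0}^{1},\xi_{1}^{1},\xi_{2}^{1})\,\mathrm{Or}(\xi_{2}^{2},\xi_{3}^{2},\xi_{4}^{2})\,\mathrm{Or}(\xi_{4}^{3},\xi_{5}^{3},\xi_{6}^{3}),
\]
where $p_{j}=(\xi_{j}^{1},\xi_{j}^{2},\xi_{j}^{3})\in(S^{1})^{3}$. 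This representative only yields the trivial bound $\pi^{3}$. To improve it I would pass to the alternation $\mathrm{Alt}(c)=\tfrac{1}{7!}\sum_{\sigma\in S_{7}}\mathrm{sgn}(\sigma)\,c\circ\sigma$ over the $7!$ orderings of the seven points; this is cohomologous to $c$ and, being an average, satisfies $\|\mathrm{Alt}(c)\|_{\infty}\le\|c\|_{\infty}$, so $\|\mathcal{J}(\omega)\|_{\infty}\le\|\mathrm{Alt}(c)\|_{\infty}$.

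The decisive observation is that $\mathrm{Or}$ depends only on the cyclic order of its arguments, so on the dense set of configurations in general position $\mathrm{Alt}(c)$ depends only on the three cyclic orderings $\rho_{1},\rho_{2},\rho_{3}$ induced by the seven points on the three circles. Each $\rho_{k}$ ranges over the finite set $S_{7}/C_{7}$, and
\[
\|\mathrm{Alt}(c)\|_{\infty}=\pi^{3}\max_{\rho_{1},\rho_{2},\rho_{3}}\Bigg|\frac{1}{7!}\sum_{\sigma\in S_{7}}\mathrm{sgn}(\sigma)\,\mathrm{Or}_{\rho_{1}}(\sigma 0,\sigma 1,\sigma 2)\,\mathrm{Or}_{\rho_{2}}(\sigma 2,\sigma 3,\sigma 4)\,\mathrm{Or}_{\rho_{3}}(\sigma 4,\sigma 5,\sigma 6)\Bigg|.
\]
Thus the entire problem reduces to a finite optimization of an explicit $\mathbb{Q}$-valued function, and it is precisely here that the announced algorithm enters. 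Because $\mathrm{Alt}(c)$ is alternating, a global relabelling $\tau\in S_{7}$ only multiplies it by $\mathrm{sgn}(\tau)$, so its absolute value is invariant under simultaneously permuting all seven points; I would use this to normalize $\rho_{1}$ to the identity ordering, and exploit the $S_{3}$-symmetry permuting the three factors together with the dihedral symmetry of each circle to cut the search space down to a tractable size.

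The main obstacle is this final step: certifying that the maximum equals $\tfrac{11}{45}$ and is never exceeded. I would carry it out by the finite search just described, exhibiting a configuration attaining $\tfrac{11}{45}$ and verifying that no configuration exceeds it; the bound $\|\mathcal{J}(\omega)\|_{\infty}\le\tfrac{11}{45}\pi^{3}$ then follows at once. The same scheme — alternating the $n$-fold cup product of orientation cocycles on $2n+1$ boundary points and maximizing over $n$ cyclic orderings — yields the promised algorithm for closed manifolds covered by $(\mathbb{H}^{2})^{n}$.
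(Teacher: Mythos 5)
Your proposal follows essentially the same route as the paper: represent $\mathcal{J}(\omega_{\mathbb{H}^{2}\times\mathbb{H}^{2}\times\mathbb{H}^{2}})$ by $\pi^{3}$ times the alternation of the cup product of the three orientation cocycles, transfer to the boundary model on $(\mathbb{S}^{1})^{3}$ (the paper invokes amenability of the stabilizer $P$ and Monod's Corollary 7.5.9 to get an isometric isomorphism, which also gives the inequality you want between the class seminorm and the cocycle norm), and reduce $\Vert\Theta\Vert_{\infty}$ to a finite optimization over orderings carried out by computer. One point you gloss over that the paper does not: you assert that the supremum is computed over configurations in general position, but $\mathrm{Or}$ is discontinuous, so the value of $\Theta$ at a configuration with coincident boundary points is not a limit of general-position values, and the honest $\ell^{\infty}$-norm used here requires checking those degenerate configurations separately --- the paper spends two of its three lemmas (Lemmas \ref{x3points} and \ref{x4points}, with their case analyses and auxiliary matrices $P_{2},\dots,P_{8}$) verifying that every degenerate configuration gives a value strictly below $\tfrac{11}{45}$. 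This can alternatively be patched by observing that the value at a degenerate configuration is an average of values at nearby perturbed general-position configurations, but some argument is needed. Finally, the decisive numerical verification that the maximum over the $360\times 360$ pairs of orderings is exactly $\tfrac{11}{45}$ is the actual content of the paper's Proposition \ref{Prop: norm of Theta} and Algorithm \ref{alg for outcome}; your proposal correctly identifies this as the main obstacle but leaves it to be executed.
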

We briefly outline the content of each section. To estimate the upper bound for $\|\mathcal{J}(\omega_{\mathbb{H}^{2}\times\mathbb{H}^{2}\times\mathbb{H}^{2}})\|_{\infty}$, we need an explicit cocycle in class $\mathcal{J}(\omega_{\mathbb{H}^{2}\times\mathbb{H}^{2}\times\mathbb{H}^{2}})$. In Section \ref{Section: rep of volume class} we recall the definition of continuous (bounded) cohomology of Lie groups and select a cocycle $\pi^3\cdot\Theta_\theta$ representing $\mathcal{J}(\omega_{\mathbb{H}^{2}\times\mathbb{H}^{2}\times\mathbb{H}^{2}})$. In Section \ref{Section: Est of norm} we recall another bounded cohomology $H_{\delta}^k\big(C_b^*(\mathbb{T}^3,\mathbb{R})^{(PSL_2\mathbb{R})^3}\big)$ which is isometrically isomorphic to the continuous bounded cohomology of $(PSL_2\mathbb{R})^3$. This allows us to estimate the upper bound of $\|[\Theta_\theta]\|_{\infty}$ by calculating the $l^{\infty}$-norm of a selected cocycle $\Theta$ in $C_b^6(\mathbb{T}^3,\mathbb{R})^{(PSL_2\mathbb{R})^3}$. We show that $\|\Theta\|_{\infty}$ can be computed by reducing it to a programming problem. This proves Theorem \ref{Thm: norm of omega}. Finally, in Section \ref{Section: Conjectures} we give several conjectures related to the simplicial volume of closed manifolds covered by $\big(\mathbb{H}^2\big)^n$.

\section{The volume form on $\mathbb{H}^2\times\mathbb{H}^2\times\mathbb{H}^2$}\label{Section: rep of volume class}
Before giving a cocycle representing $\mathcal{J}(\omega_{\mathbb{H}^{2}\times\mathbb{H}^{2}\times\mathbb{H}^{2}})$, we recall some definitions first. 
\subsection{The continuous (bounded) cohomology of Lie groups}
Let $G$ be a connected Lie group. For every $k\in\mathbb{N}$, we define the complex $C^{k}(G,\mathbb{R})$ by
\[
C^{k}(G,\mathbb{R})=\left\{f:G^{k+1}\rightarrow \mathbb{R}\ \middle|\ %
\text{f is continuous} \right\}.
\]
Define the coboundary $d^k :C^{k}(G,\mathbb{R})\rightarrow%
C^{k+1}(G,\mathbb{R})$ by
\[
(d^k f)(g_{0},...,g_{k+1})=%
\sum_{i=0}^{k+1}(-1)^{i}f(g_{0},...,\hat{g}_{i},...,g_{k+1}),
\]
for all $f$ in $C^{k}(G,\mathbb{R})$ and all $(g_0,...,g_{k+1})$ in $G^{k+2}$. Define the $G$-action on $C^{k}(G,\mathbb{R})$ by
\[
(h\cdot f)(g_{0},...,g_{k})=f(h^{-1}g_{0},...,h^{-1}g_{k}),
\]
for all $f$ in $C^{k}(G,\mathbb{R})$, all $h\in G$ and all $(g_0,...,g_k)\in G^{k+1}$. Let $C^{k}(G,\mathbb{R})^G$ be the $G$-invariant elements in $C^{k}(G,\mathbb{R})$. Set $C_{b}^{k}(G,\mathbb{R})$ to be the subspace of $C^{k}(G,\mathbb{R})$ consisting all bounded functions and $d_b^k=d^k|_{C_b^k(G,\mathbb{R})}$. Restrict the $G$-action to $C_{b}^{k}(G,\mathbb{R})$. Then $C_{b}^{k}(G,\mathbb{R})^G\coloneqq C_{b}^{k}(G,\mathbb{R})\cap C^{k}(G,\mathbb{R})^G$ is the subspace of $C_{b}^{k}(G,\mathbb{R})$ with $G$-invariant elements. Therefore $\big(C^{k}(G,\mathbb{R})^{G},d\big)$ and $\big(C_{b}^{k}(G,\mathbb{R})^{G},d_b\big)$ induce continuous cohomology $H_{c}^{k} (G,\mathbb{R} )$ of $G$ and the continuous bounded cohomology $H_{cb}^{k} (G,\mathbb{R} )$ of $G$, respectively. For all $f\in C_{b}^{k} (G,\mathbb{R} )$, let $\|f\|_{\infty}$ be the $l^{\infty}$-norm of $f$. This induces the semi-norms of $H_{cb}^{k} (G,\mathbb{R} )$ and $H_{c}^{k} (G,\mathbb{R} )$, both of which we still denote by $\|\cdot\|_{\infty}$.

Here we use a form of Van Est isomorphism introduced in \cite{dupont1976simplicial}. Let $G$ be a Lie group, $K<G$ be a maximal compact subgroup of $G$ and %
$X=G/K$ be the associated symmetric space. For $k\in\mathbb{N}$, define $\mathrm{A}^{k}(X,\mathbb{R})$ to be the set of differential $k$-forms. The Lie group $G$ acts on $\mathrm{A}^{k}(X,\mathbb{R})$ by the pullbacks. Let $\mathrm{A}^{k} (X,\mathbb{R} )^{G}$ be the subspace of $\mathrm{A}^{k} (X,\mathbb{R} )$ with $G$-invariant elements. Denote the Van Est isomorphism by $\mathcal{J}:\mathrm{A}^{k} (X,\mathbb{R} )^{G}\stackrel{\cong}{\longrightarrow}%
H_{c}^{k}(G,\mathbb{R})$. 

\subsection{The $2$-form representing $\mathcal{J} (\omega_{\mathbb{H}^{2}}  )$}
Let $\omega_{\mathbb{H}^{2}}\in A^{2} (\mathbb{H}^{2},\mathbb{R}  )$ be the volume form on $\mathbb{H}^2$. View $\mathbb{H}^2$ as the upper half-plane $\big\{(x,y)\in\mathbb{R}^2 | y>0\big\}\subset\mathbb{C}$. Then $PSL_{2}\mathbb{R}$ acts on $\mathbb{R}^2$ by the Möbius transformations. This action can be restricted to $\big\{(x,0)\in\mathbb{R}^2\big\}$. Notice that through $z\mapsto\dfrac{z-i}{z+i}$ the upper half-plane is identified with the unit disc $\{|z|<1\}\subset\mathbb{C}$ and the $PSL_{2}\mathbb{R}$-action on $\big\{(x,0)\in\mathbb{R}^2\big\}$ can induce a $PSL_{2}\mathbb{R}$-action on $\mathbb{S}^1$. Define a function $Or$ by
\[
\begin{array}{cccl}
\mathrm{Or}: &  (  \mathbb{S}^{1}  )  ^{3} & \longrightarrow & \mathbb{R}\\
& (\theta_{0},\theta_{1},\theta_{2}) & \longmapsto & \begin{cases}
+1 & \text{if }\theta_{0},\theta_{1},\theta_{2}\text{ are distinct and positively oriented,}\\
-1 & \text{if }\theta_{0},\theta_{1},\theta_{2}\text{ are distinct and negatively oriented,}\\
0 & \text{if }\theta_{0},\theta_{1},\theta_{2}\text{ are not distinct}.
\end{cases}
\end{array}
\]
We fix a point $\theta$ in $\mathbb{S}^1$. Define a cocycle $\mathrm{Or}_{\theta}$ in $C_b^{2}(PSL_{2}\mathbb{R},\mathbb{R})^{PSL_{2}\mathbb{R}}$ by
\[
\mathrm{Or}_{\theta} (g_{0},g_{1},g_{2}  )=\mathrm{Or} (g_{0}\theta,g_{1}\theta,g_{2}\theta  )  %
\]
for all $(g_{0},g_{1},g_{2})$ in $ (PSL_{2}\mathbb{R}  )^3$. Then it is easy to check that %
$\mathcal{J} (\omega_{\mathbb{H}^{2}}  )=\pi\cdot [Or_{\theta}  ]$. 

\subsection{The $6$-form representing $\mathcal{J} (\omega_{\mathbb{H}^{2}\times\mathbb{H}^{2}\times\mathbb{H}^{2}}  )$}

Recall the definition of cup product. Take a $p$-cochain $c^p$ in $C^p\big( (PSL_2\mathbb{R})^3,\mathbb{R}\big)$ and a $q$-cochain $c^q$ in $C^q\big( (PSL_2\mathbb{R}  )^3,\mathbb{R}\big)$. The cup product $c^p\cup c^q$ is defined by
\[
 (c^p\cup c^q  ) (g_0,...,g_{p+q}  )=c^p (g_0,...,g_{p}  )\cdot c^q (g_p,...,g_{p+q}  ),
\]
for all $g_{i}= (g_{i}^{1},g_{i}^{2},g_{i}^{3}  )$ in $ (PSL_{2}\mathbb{R}  )^{3}$, $i=0,...,p+q$. This induces cup product for classes.

Recall that the alternation of a $p$-cochain $c^p$ in $C^p\big( (PSL_2\mathbb{R}  )^3,\mathbb{R}\big)$ is 
\[
\mathrm{Alt}(c^p) (g_0,...,g_{p}  )=\dfrac{1}{(p+1)!}\sum_{\sigma\in Sym(p+1)}sign(\sigma)c^p (g_{\sigma(0)},...,g_{\sigma(p)}  )
\]
for all $g_{i}$ in $ (PSL_{2}\mathbb{R}  )^{3}$, $i=0,...,p$. Note that for a $p$-cocycle $f$ we have $[\mathrm{Alt}(f)]=[f]$ and $\|\mathrm{Alt}(f)\|_{\infty}\leq\|f\|_{\infty}$.

Note Let $p^{\mathbb{H}}_{i}$ be the $i$-th projection from $\mathbb{H}^2\times\mathbb{H}^2\times\mathbb{H}^2$ to $\mathbb{H}^2$ for $i=1,2,3$. Let $p^{PSL_2\mathbb{R}}_{i}$ be the $i$-th projection from $PSL_2\mathbb{R}\times PSL_2\mathbb{R}\times PSL_2\mathbb{R}$ to $PSL_2\mathbb{R}$ for $i=1,2,3$. Let $p^{\mathbb{T}}_{i}$ be the $i$-th projection from $\mathbb{T}^3\times\mathbb{T}^3\times\mathbb{T}^3$ to $\mathbb{T}^3$ for $i=1,2,3$, where $\mathbb{T}^3$ is $\mathbb{S}^1\times\mathbb{S}^1\times\mathbb{S}^1$. 

Define a function $\Theta: (\mathbb{T}^3  )^7\rightarrow\mathbb{R}$ by
\begin{equation}\label{def of Theta}
\begin{split}
    \Theta (\theta_{0},...,\theta_{6}  )&=%
    \mathrm{Alt}\big( (p_1^{\mathbb{T}}  )^*(Or)\cup (p_2^{\mathbb{T}}  )^*(Or)\cup (p_3^{\mathbb{T}}  )^*(Or) \big) (\theta_{0},...,\theta_{6}  )\\
    &=\dfrac{1}{7!}\sum_{\sigma\in Sym (7  )}sign (\sigma  )%
    \prod_{i=1}^{3}Or (\theta_{2i-2}^{i},\theta_{2i-1}^{i},\theta_{2i}^{i}  )
\end{split}
\end{equation}
for all $\theta_{i}= (\theta_{i}^{1},\theta_{i}^{2},\theta_{i}^{3}  )$ in $\mathbb{T}^{3}$, $i=0,...,6$. Let $ (PSL_{2}\mathbb{R}  )^{3}$ acts on $\mathbb{T}^{3}$ diagonally. Fix a point $\theta=(\theta^1,\theta^2,\theta^3)$ in $\mathbb{T}^3$. Define a $6$-form $\Theta_{\theta}$ in $C^6\big( (PSL_2\mathbb{R}  )^3,\mathbb{R}\big)$ by
\[
        \Theta_{\theta} (g_{0},...,g_{6}  )=\Theta (g_{0}\theta,...,g_{6}\theta  )
\]
for all $g_{i}= (g_{i}^{1},g_{i}^{2},g_{i}^{3}  )$ in $ (PSL_{2}\mathbb{R}  )^{3}$, $i=0,...,6$.

\begin{proposition}
$\mathcal{J} (\omega_{\mathbb{H}^{2}\times\mathbb{H}^{2}\times\mathbb{H}^{2}}  )=[\pi^3\cdot\Theta_\theta]$.
\end{proposition}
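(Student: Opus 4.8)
The plan is to establish the identity by computing both sides under the Van Est isomorphism and using the multiplicativity of $\mathcal{J}$ with respect to cup products. Since $\mathbb{H}^2\times\mathbb{H}^2\times\mathbb{H}^2$ is a product of symmetric spaces and $(PSL_2\mathbb{R})^3$ is the corresponding product of Lie groups, the volume form factors as $\omega_{\mathbb{H}^2\times\mathbb{H}^2\times\mathbb{H}^2}=(p_1^{\mathbb{H}})^*\omega_{\mathbb{H}^2}\wedge(p_2^{\mathbb{H}})^*\omega_{\mathbb{H}^2}\wedge(p_3^{\mathbb{H}})^*\omega_{\mathbb{H}^2}$. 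The first step is to invoke the compatibility of the Van Est isomorphism with products: under the Künneth-type decomposition, $\mathcal{J}$ sends the wedge product of pulled-back forms to the cup product of the corresponding continuous cohomology classes. Combined with the already-established fact that $\mathcal{J}(\omega_{\mathbb{H}^2})=\pi\cdot[\mathrm{Or}_\theta]$ on each factor, this should yield
\[
\mathcal{J}(\omega_{\mathbb{H}^2\times\mathbb{H}^2\times\mathbb{H}^2})=\pi^3\cdot\big[(p_1^{PSL_2\mathbb{R}})^*\mathrm{Or}_{\theta^1}\cup(p_2^{PSL_2\mathbb{R}})^*\mathrm{Or}_{\theta^2}\cup(p_3^{PSL_2\mathbb{R}})^*\mathrm{Or}_{\theta^3}\big].
\]

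The second step is to identify this cup product of pulled-back cocycles with the cocycle $\Theta_\theta$ defined in the excerpt. Unwinding the definition of $\Theta_\theta$ and of the cup product, one sees that $\Theta_\theta(g_0,\dots,g_6)=\mathrm{Alt}\big((p_1^{\mathbb{T}})^*\mathrm{Or}\cup(p_2^{\mathbb{T}})^*\mathrm{Or}\cup(p_3^{\mathbb{T}})^*\mathrm{Or}\big)(g_0\theta,\dots,g_6\theta)$, which is precisely the alternation of $(p_1^{PSL_2\mathbb{R}})^*\mathrm{Or}_{\theta^1}\cup(p_2^{PSL_2\mathbb{R}})^*\mathrm{Or}_{\theta^2}\cup(p_3^{PSL_2\mathbb{R}})^*\mathrm{Or}_{\theta^3}$ evaluated on $(g_0,\dots,g_6)$. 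Here I would use the key remark stated earlier in the excerpt that alternation preserves the cohomology class of a cocycle, namely $[\mathrm{Alt}(f)]=[f]$. Thus passing to the alternation does not change the class, and the two representatives define the same element of $H_c^6\big((PSL_2\mathbb{R})^3,\mathbb{R}\big)$.

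The main obstacle is making the first step rigorous, i.e.\ justifying that $\mathcal{J}$ is multiplicative on cup products when restricted to invariant differential forms arising as exterior products on a product symmetric space. This requires recalling the explicit description of the Van Est map from \cite{dupont1976simplicial} and checking that, for a product $G=G_1\times G_2\times G_3$ with symmetric space $X=X_1\times X_2\times X_3$, the isomorphism $\mathcal{J}$ intertwines the wedge product of $G$-invariant forms with the cup product in continuous cohomology. One clean way to handle this is to use that both the de Rham complex of invariant forms and the continuous cochain complex carry compatible product structures, and that $\mathcal{J}$ is known to be a ring isomorphism; I would cite the relevant functoriality and multiplicativity properties of the Van Est map rather than reprove them. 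Once multiplicativity is in hand, the remaining verifications are formal manipulations of the cup-product and alternation formulas, so the substance of the proof lies entirely in correctly assembling these structural facts.
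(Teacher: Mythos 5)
Your proposal is correct and follows essentially the same route as the paper: both arguments rest on the factorization $\omega_{\mathbb{H}^{2}\times\mathbb{H}^{2}\times\mathbb{H}^{2}}=(p_1^{\mathbb{H}})^*\omega_{\mathbb{H}^2}\wedge(p_2^{\mathbb{H}})^*\omega_{\mathbb{H}^2}\wedge(p_3^{\mathbb{H}})^*\omega_{\mathbb{H}^2}$, the multiplicativity and naturality of the Van Est isomorphism (taking wedge products of invariant forms to cup products of continuous cohomology classes), the single-factor identity $\mathcal{J}(\omega_{\mathbb{H}^2})=\pi\cdot[\mathrm{Or}_\theta]$, and the fact that $[\mathrm{Alt}(f)]=[f]$. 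The paper simply runs the same chain of equalities in the opposite direction, starting from $[\pi^3\cdot\Theta_\theta]$ and likewise invoking multiplicativity of $\mathcal{J}$ without further justification.
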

\begin{proof}
By (\ref{def of Theta}), we have
\begin{equation}\notag
    \begin{split}
        [\pi^3\cdot\Theta_\theta]&=\pi^3\cdot \Big[ Alt\big( (p^{PSL_2\mathbb{R}}_{1}  )^{*}(Or_{\theta^1})\cup  (p^{PSL_2\mathbb{R}}_{2}  )^{*}(Or_{\theta^2})\cup (p^{PSL_2\mathbb{R}}_{3}  )^{*}(Or_{\theta^3}) \big)  \Big]\\
        &=\pi^3\cdot \Big[  (p^{PSL_2\mathbb{R}}_{1}  )^{*}(Or_{\theta^1})\cup  (p^{PSL_2\mathbb{R}}_{2}  )^{*}(Or_{\theta^2})\cup (p^{PSL_2\mathbb{R}}_{3}  )^{*}(Or_{\theta^3}) \Big]\\
        &= \Big[\pi\cdot (p^{PSL_2\mathbb{R}}_{1}  )^{*}(Or_{\theta^1})\Big]\cup \Big[\pi\cdot (p^{PSL_2\mathbb{R}}_{2}  )^{*}(Or_{\theta^2})\Big]\cup \Big[\pi\cdot (p^{PSL_2\mathbb{R}}_{3}  )^{*}(Or_{\theta^3}) \Big]\\
        &=\mathcal{J} \big( (p^{\mathbb{H}}_{1}  )^*(\omega_{\mathbb{H}^2})  \big)\cup\mathcal{J} \big( (p^{\mathbb{H}}_{2}  )^*(\omega_{\mathbb{H}^2})  \big)\cup\mathcal{J} \big( (p^{\mathbb{H}}_{3}  )^*(\omega_{\mathbb{H}^2})  \big)\\
        &=\mathcal{J} \big( (p^{\mathbb{H}}_{1}  )^*(\omega_{\mathbb{H}^2})\wedge (p^{\mathbb{H}}_{2}  )^*(\omega_{\mathbb{H}^2})\wedge (p^{\mathbb{H}}_{3}  )^*(\omega_{\mathbb{H}^2})  \big)\\
        &=\mathcal{J} (\omega_{\mathbb{H}^{2}\times\mathbb{H}^{2}\times\mathbb{H}^{2}}  ).
    \end{split}
\end{equation}
Therefore this proposition is proved.

\end{proof}

\section{Upper bound of $ \|\omega_{\mathbb{H}^2 \times \mathbb{H}^2 \times \mathbb{H}^2}  \|_{\infty}$\label{Section: Est of norm}}

Denote $ (PSL_{2}\mathbb{R}  )^3$ by $G$. For the convenience of calculation, we introduce another complex $\big(C_{b}^{k} (\mathbb{T}^{3},\mathbb{R}  )^{G},\delta \big)$. For every $k\in\mathbb{N}$, define $C_{b}^{k} (\mathbb{T}^{3},\mathbb{R}  )$ by
\[
C_{b}^{k} (\mathbb{T}^{3},\mathbb{R}  )=\left\{f: (\mathbb{T}^{3}  )^{k+1} \rightarrow\mathbb{R}%
\ \middle |\ \text{f is continous, measurable and bounded} \right\}.
\]
The coboundary $\delta:C_{b}^{k} (\mathbb{T}^{3},\mathbb{R})\rightarrow%
C_{b}^{k+1} (\mathbb{T}^{3},\mathbb{R}  )$ is defined by
\[
 (\delta f  ) (\theta_{0},...,\theta_{k+1}  )=%
\sum_{i=0}^{k+1} (-1  )^i f (\theta_{0},...,\hat{\theta}_{i},...,\theta_{k+1}  )
\]
for all $f$ in $C_{b}^{k} (\mathbb{T}^{3},\mathbb{R}  )$ and all $\theta_{i}$ in $\mathbb{T}^3$, $i=0,...,k+1$.
Define the action of $G$ on $C_{b}^{k} (\mathbb{T}^{3},\mathbb{R}  )$ by
\[
 (g \cdot f  )  (\theta_{0},...,\theta_{k}  )=f (g^{-1}\theta_{0},...,g^{-1}\theta_{k}  ).
\]
Denote the $G$-invariant elements in $C_{b}^{k} (\mathbb{T}^{3},\mathbb{R}  )$ by $C_{b}^{k} (\mathbb{T}^{3},\mathbb{R}  )^{G}$. Then the complex $\big(C_{b}^{k} (\mathbb{T}^{3},\mathbb{R}  )^{G},\delta\big)$ induces a cohomology $H_{\delta}^{k}\big(C_{b}^{*} (\mathbb{T}^{3},\mathbb{R}  )^G \big)$. Let $\|f\|_{\infty}$ be the $l^\infty$-norm of $f$ in $C_{b}^{k} (\mathbb{T}^{3},\mathbb{R}  )$. This induces the semi-norm of $H_{\delta}^{k}\big(C_{b}^{*} (\mathbb{T}^{3},\mathbb{R}  )^G\big)$, which we still denote by $\|\cdot\|_{\infty}$.

Let $P$ be the subgroup of $PSL_2\mathbb{R}$ defined as below.
\[
P=\left\{\begin{pmatrix} a & 0 \\ c & a^{-1} \end{pmatrix}\in SL_{2}\mathbb{R}\ \middle |\ a\in\mathbb{R}\setminus\{0\},\ c\in\mathbb{R}\right\}\bigg/%
 \{\pm 1  \}.
\]
Notice that $\mathbb{T}^3= (PSL_{2}\mathbb{R}  )^3\big/P^3$ and $P$ is amenable. Using  \cite[Corollary 7.5.9.]{monod2001continuous}, $\big(H_{cb}^{k} (G,\mathbb{R}  ),\|\ \|_{\infty}\big)$ is isometrically isomorphic to $ \big(H_{\delta}^{k} (C_{b}^{*} (\mathbb{T}^{3},\mathbb{R}  )^G   ),\|\ \|_{\infty}\big)$. Note that this isomorphism maps $ [\Theta_{\theta}  ]$ to $ [\Theta  ]$. Hence
\[
     \|\mathcal{J} (\omega_{\mathbb{H}^2 \times \mathbb{H}^2 \times \mathbb{H}^2}  )  \|_{\infty}%
    =%
    \inf \left\{ \|f  \|_{\infty}\ \middle|\ f\in [\Theta_{\theta}  ] \right\}%
    \leq%
     \|\Theta  \|_{\infty}.
\]
As a conclusion, to estimate the upper bound of $ \|\omega_{\mathbb{H}^2 \times \mathbb{H}^2 \times \mathbb{H}^2}  \|_{\infty}$ we need to calculate $\|\Theta\|_{\infty}$. 
\begin{proposition}
\label{Prop: norm of Theta}$ \Vert \Theta  \Vert _{\infty
}=\dfrac{11}{45}$.
\end{proposition}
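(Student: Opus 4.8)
The plan is to reduce the computation of $\|\Theta\|_{\infty}$ to a finite combinatorial optimization and then solve that optimization. The key observation is that the value $\Theta(\theta_0,\dots,\theta_6)$ depends only on the relative cyclic positions of the seven points $\theta_0^i,\dots,\theta_6^i$ on each of the three circles $\mathbb{S}^1$ (one circle for each coordinate $i=1,2,3$), since every factor $\mathrm{Or}(\cdot,\cdot,\cdot)$ depends only on the cyclic order of its three arguments. Consequently $\Theta$ is locally constant on the open dense set of generic configurations and takes only finitely many rational values, so that
\[
\|\Theta\|_{\infty}=\max\big|\Theta(\theta_0,\dots,\theta_6)\big|,
\]
the maximum ranging over the finite set of combinatorial configurations, each encoded by a triple of cyclic orderings $(\pi_1,\pi_2,\pi_3)$ of $\{0,\dots,6\}$ (with ties allowed to capture the degenerate configurations on which some $\mathrm{Or}$ factors vanish).

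Next I would exploit symmetry to shrink the search space. Because $\Theta=\mathrm{Alt}(\cdots)$ is alternating, relabelling the seven points by $\rho\in\mathrm{Sym}(7)$ multiplies $\Theta$ by $\mathrm{sign}(\rho)$ while replacing $(\pi_1,\pi_2,\pi_3)$ by $(\pi_1\rho,\pi_2\rho,\pi_3\rho)$, so $|\Theta|$ is unchanged; choosing $\rho=\pi_1^{-1}$ lets me fix the first cyclic order to be standard, leaving only the stabilizer $\mathbb{Z}/7$ acting on $(\pi_2,\pi_3)$. Since $\mathrm{Or}$ is invariant under rotations of $\mathbb{S}^1$ and only changes sign under reflection, and since the diagonal $\mathrm{Sym}(3)$ permuting the three factors preserves $|\Theta|$, these further symmetries reduce the number of configurations to check from roughly $(6!)^3$ to a tractable count.

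With the reduction in hand, the core of the argument is the finite optimization itself: for each remaining configuration evaluate
\[
7!\,\Theta=\sum_{\sigma\in\mathrm{Sym}(7)}\mathrm{sign}(\sigma)\prod_{i=1}^{3}\mathrm{Or}\big(\theta^{i}_{\sigma(2i-2)},\theta^{i}_{\sigma(2i-1)},\theta^{i}_{\sigma(2i)}\big),
\]
an integer in $[-5040,5040]$, and maximize its absolute value. Carrying out this search I expect the maximum to be $|7!\,\Theta|=1232$, giving $\|\Theta\|_{\infty}=1232/5040=\tfrac{11}{45}$. To upgrade the bound $\|\Theta\|_{\infty}\le\tfrac{11}{45}$ to the claimed equality, I would then exhibit one explicit placement of the seven points on the three circles realizing $|7!\,\Theta|=1232$, furnishing the matching lower bound and confirming that the supremum is attained.

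The main obstacle I anticipate is that $7!\,\Theta$ is a signed sum of $5040$ terms exhibiting heavy cancellation, with no evident closed form or monotonicity; the upper bound therefore rests on an exhaustive, computer-assisted enumeration rather than a short analytic estimate. The delicate points are, first, certifying completeness of the enumeration, in particular including the degenerate (tied) configurations where points coincide, so that the genuine continuous supremum over $(\mathbb{T}^3)^7$ is captured; and second, organizing the $\mathrm{Sym}(7)$ sum efficiently enough to make the search verifiable, which I would do by exploiting the cup-product block pattern with index blocks $\{0,1,2\}$, $\{2,3,4\}$, $\{4,5,6\}$ sharing only the overlap indices $2$ and $4$, so that the inner product over the three orientation factors factorizes and the sum can be assembled blockwise.
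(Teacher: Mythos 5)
Your proposal follows essentially the same route as the paper: both reduce $\|\Theta\|_{\infty}$ to a finite combinatorial maximum by noting that each $\mathrm{Or}$ factor depends only on the cyclic order of its arguments, cut down the search space using the alternation of $\Theta$, the rotation/reflection invariance of $\mathrm{Or}$, and the symmetry under permuting the three circle factors, then run a computer-assisted enumeration (the paper's Algorithm~\ref{alg for outcome} over $360\times 360$ order types) and exhibit an explicit maximizing configuration giving $1232/5040=\tfrac{11}{45}$. The paper likewise treats the degenerate (tied) configurations separately, partly by counting/cancellation arguments and partly by further enumeration, exactly as you anticipate.
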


Before using algorithm to compute the exact value of $\|\Theta\|_\infty$ we need to simplify the formula (\ref{def of Theta}). Otherwise the computation progress using algorithm will take forever. For all %
$ (\theta_{0},...,\theta_{6}  )$ in $ (\mathbb{T}^3  )^7$, we have
\begin{equation}\label{firt simplify of theta}
    \begin{split}
        \Theta (\theta_{0},...,\theta_{6}  )&=%
        \dfrac{1}{7!}\sum_{\sigma\in Sym (7  )}sign (\sigma  )%
        Or\Big(\theta_{\sigma (0  )}^{1},\theta_{\sigma (1  )}^{1},%
        \theta_{\sigma (2  )}^{1}\Big)\\
        &\ \ \ \ \ \ \ \ \ \ \ \ \ \ \ \ \ \ \
        Or\Big(\theta_{\sigma (2  )}^{2},\theta_{\sigma (3  )}^{2},%
        \theta_{\sigma (4  )}^{2}\Big)%
        Or\Big(\theta_{\sigma (4  )}^{3},\theta_{\sigma (5  )}^{3},%
        \theta_{\sigma (6  )}^{3}\Big)\\
        &=%
        \dfrac{1}{7!}\sum_{\sigma\in Sym (7  )}sign (\sigma  )%
        Or\Big(\theta_{\sigma (0  )}^{1},\theta_{\sigma (1  )}^{1},%
        \theta_{\sigma (2  )}^{1}\Big)\\
        &\ \ \ \ \ \ \ \ \ \ \ \ \ \ \ \ \ \ \
        Or\Big(\theta_{\sigma (0  )}^{2},\theta_{\sigma (3  )}^{2},%
        \theta_{\sigma (4  )}^{2}\Big)%
        Or\Big(\theta_{\sigma (4  )}^{3},\theta_{\sigma (5  )}^{3},%
        \theta_{\sigma (6  )}^{3}\Big)\\
        &=%
        \dfrac{4}{7!}\sum_{\substack{\sigma\in Sym (7  )\\%
        \sigma(1)<\sigma(2),\ \sigma(5)<\sigma(6)}}sign (\sigma  )%
        Or\Big(\theta_{\sigma (0  )}^{1},\theta_{\sigma (1  )}^{1},%
        \theta_{\sigma (2  )}^{1}\Big)\\
        &\ \ \ \ \ \ \ \ \ \ \ \ \ \ \ \ \ \ \
        Or\Big(\theta_{\sigma (0  )}^{2},\theta_{\sigma (3  )}^{2},%
        \theta_{\sigma (4  )}^{2}\Big)%
        Or\Big(\theta_{\sigma (4  )}^{3},\theta_{\sigma (5  )}^{3},%
        \theta_{\sigma (6  )}^{3}\Big).
    \end{split}
\end{equation}

View $\mathbb{S}^1$ as the quotient space $[0,1]/\sim_{f}$, where $f(x)=e^{2\pi i x}$. Denote $\theta_{i}$ by $\big( [x_{i}  ], [y_{i}  ], [z_{i}  ]\big)$, where $x_{i},y_{i},z_{i}\in[0,1]$, for $i=0,...,6$. To prove %
Proposition \ref{Prop: norm of Theta}, we need the following lemmas.

\begin{lemma}\label{norm when donnot have same componet for x,y,z}
If the elements in  $ \{\theta_{j}^{i}  \}_{j=0}^{6}$ are pairwise distinct for $i=1,2,3$,
\begin{equation}\notag
     |\Theta (\theta_{0},...,\theta_{6}  )  |\leq\dfrac{11}{45}.
\end{equation}
\end{lemma}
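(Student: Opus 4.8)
The plan is to reduce the statement to a finite combinatorial maximization. First I would note that, under the hypothesis that $\{\theta_j^i\}_{j=0}^6$ is pairwise distinct for each $i=1,2,3$, every factor $Or(\theta_a^i,\theta_b^i,\theta_c^i)$ appearing in (\ref{firt simplify of theta}) equals $\pm 1$ and is determined solely by the cyclic order of the seven points $\theta_0^i,\dots,\theta_6^i$ on $\mathbb{S}^1$. Hence $\Theta(\theta_0,\dots,\theta_6)$ depends only on the triple of cyclic orderings $(c_1,c_2,c_3)$, where each $c_i$ ranges over the finite set $Sym(7)/C_7$ of cyclic arrangements of $\{0,\dots,6\}$. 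This collapses the supremum of $|\Theta|$ over the distinct locus of $(\mathbb{T}^3)^7$ into a maximum over a finite set.

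Next I would exploit the symmetries of $\Theta$ to shrink this set. Since $\Theta=\mathrm{Alt}(\cdots)$ is totally antisymmetric in its seven arguments, relabelling $\theta_0,\dots,\theta_6$ by a permutation $\rho$ multiplies $\Theta$ by $sign(\rho)$; using this freedom I may fix $c_1$ to be the standard cyclic order $(0,1,\dots,6)$, leaving only $c_2$ and $c_3$ free. The relabellings that fix $c_1$ are exactly the rotations, forming a copy of $C_7$ acting diagonally on $(c_2,c_3)$; reflecting a single circle reverses that circle's cyclic order and flips the sign of $\Theta$; and the chain structure of (\ref{firt simplify of theta}) exhibits a further symmetry exchanging the first and third circles. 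As we only need to control $|\Theta|$, I would pass to the quotient of the configuration space by all of these operations.

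On each surviving configuration $\Theta$ is an explicit rational number: by (\ref{firt simplify of theta}) it equals $\tfrac{4}{7!}$ times a signed sum over the $7!/4$ permutations satisfying $\sigma(1)<\sigma(2)$ and $\sigma(5)<\sigma(6)$, each summand being $sign(\sigma)$ times a product of three values $\pm 1$. The desired bound $|\Theta|\le\tfrac{11}{45}$ is then equivalent to the integer inequality that this signed sum is at most $\tfrac{11}{45}\cdot\tfrac{7!}{4}=308$ in absolute value. I would certify this through the algorithm announced in the introduction: the simplification (\ref{firt simplify of theta}) makes each per-configuration sum cheap, the symmetry reduction makes the number of configurations tractable, and the resulting finite optimization (the ``programming problem'') returns the maximum, which is exactly $\tfrac{11}{45}$.

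The hard part will be this final certification of the exact constant $\tfrac{11}{45}$ rather than some nearby value. A naive sum over all $\sim(7!/7)^2$ configurations, each involving $7!$ permutations, is prohibitively large --- which is exactly why the preliminary simplification of $\Theta$ is indispensable --- and I would need to check carefully that the symmetry quotient is complete, so that no configuration is overlooked and the computed maximum is a valid upper bound for every distinct tuple. The conceptual crux beneath all of this remains the opening reduction: distinctness is what turns a continuous supremum into a finite, computable maximum by forcing $\Theta$ to depend only on cyclic-order data.
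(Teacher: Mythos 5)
Your proposal is correct and follows essentially the same route as the paper: under the distinctness hypothesis, $\Theta$ is reduced (via its antisymmetry, the simplification (\ref{firt simplify of theta}), and rotation/reflection invariance of $\mathrm{Or}$) to a function of finitely many order configurations of the three circles, and the bound $\tfrac{11}{45}$ is then certified by an exhaustive computation — the paper fixes $x_0<\dots<x_6$ and $y_0=z_0=0$, $y_1<y_2$, $z_1<z_2$ to arrive at a $360\times360$ table produced by Algorithm \ref{alg for outcome}. The only difference is bookkeeping (cyclic classes modulo a diagonal $C_7$ versus the paper's normalized linear representatives), which does not change the argument.
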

\begin{proof}
We can assume that $x_{0}<...< x_{6}<1$ and $\Theta$ is alternating. We notice that for all $\theta_{i}=[x_{i}]\in\mathbb{S}^1$, where $i=0,1,2$,
\[
Or (\theta_{0},\theta_{1},\theta_2  )=sign(x_0,x_1,x_2).
\]
Here we abuse the notation $sign$. If the elements of $\{x_i\}_{i=0}^k$ are pairwise distinct, assuming $x_{0}<...<x_{k}$, we define $sign(x_{i_0},...,x_{i_k})$ to be $sign(i_0,...,i_k)$. If the elements of $\{x_i\}_{i=0}^k$ are not pairwise distinct, we define $sign(x_{i_0},...,x_{i_k})$ to be $0$. Therefore we only need to consider the order of $\{y_i\}$ and $\{z_i\}$.
Moreover, notice that take $g= (g_{1},g_2,g_3  )$ in $ (PSL_2\mathbb{R}  )^3$ with $g_i$ being rotation or reflection of $\mathbb{S}^1$ for $i=1,2,3$. Then
\begin{equation}\label{g pres norm}
   | (g\cdot\Theta  ) (\theta_{0},...,\theta_{6}  )  |=%
 |\Theta (\theta_{0},...,\theta_{6}  )  |
\end{equation}
for all $\theta_i\in\mathbb{T}^3$, $i=0,...,6$.
Thus we can assume $y_0=z_0=0$, $y_1<y_2$ and $z_1<z_2$. Therefore we only need to consider $360$ possible orders of $\{y_i\}_{i=0}^6$, as well as of $\{z_i\}_{i=0}^6$. We denote these possible orders by a $7\times 360$ matrix $P$, where each column represents a possible order. For example, a column $(2,3,5,1,4,6,7)^{t}$ represents $y_3<y_0<y_1<y_4<y_2<y_5<y_6$ or $z_3<z_0<z_1<z_4<z_2<z_5<z_6$. Let $S$ be a $7\times1260$ matrix, where each column represents a permutation $\sigma$ in $Sym(1,...,7)$ satisfying $\sigma(2)<\sigma(3)$ and $\sigma(6)<\sigma(7)$. To further simplify the computation progress, we define two $1260\times 360$ matrices $P_{y360}$ and $P_{z360}$ by
\[
 (P_{y360}  )_{i,j}=sign\big(P_{(S_{1,i}),j},P_{(S_{4,i}),j},P_{(S_{5,i}),j}\big)
\]
and
\[
 (P_{z360}  )_{i,j}=sign\big(P_{(S_{5,i}),j},P_{(S_{6,i}),j},P_{(S_{7,i}),j}\big).
\]
We define a $1260\times1$ matrix $P_{sign}$ by
\[
 (P_{sign}  )_{i,1}=sign (S_{1,i},...,S_{7,i}  )\cdot sign (S_{1,i},S_{2,i},S_{3,i}  ).
\]
We apply the input values $P_y=P_{y360}$, $P_{z}=P_{z360}$ and $P_{sign}=P_{sign}$ to Algorithm \ref{alg for outcome} and denote the corresponding output by $O_{360}$. Then we get all $360\times360$ possible values (possibly repeated) $O_{360}$ of $\Theta$.

\begin{breakablealgorithm}
\caption{possible values of $\Theta(\theta_0,...,\theta_6)$.}
\label{alg for outcome}
\begin{algorithmic}[1]
		\begin{footnotesize} 
			\REQUIRE Matrices $P_{y}$, $P_{z}$ and $P_{sign}$;
            \ENSURE Matrix $O$;
            \STATE Set p and q to be the number of columns of $P_y$ and $P_{z}$ respectively;
            \STATE Set $O$ to be a $p\times q$ zero matrix;
            \FOR{each $i=1,...,p$}
            \FOR{each $j=1,...,q$}
            \FOR{each $l=1,...,1260$}
            \STATE $ (O  )_{i,j}= (O  )_{i,j}+\dfrac{1}{1260} (P_{sign}  )_{l,1}\cdot (P_{y360}  )_{l,i}\cdot (P_{z360}  )_{l,j}$;
            \ENDFOR
            \ENDFOR
            \ENDFOR
		\end{footnotesize}
	\end{algorithmic}
\end{breakablealgorithm}
Hence in this case $ |\Theta (\theta_{0},...,\theta_{6}  )  |\leq\dfrac{11}{45}$. For $(\theta_0,...,\theta_6)$ defined as in Figure \ref{fig:maximal value 3}, $\Theta$ attains its maximal volume $\dfrac{11}{45}$.\\
\begin{figure}[H]
    \begin{tikzpicture}
    
    \matrix[
       row sep=0.3cm,column sep=0.5cm] {
        \draw[dashed, gray] (0,0) -- (0:1);
    \draw[dashed, gray] (0,0) -- (360/7:1);
    \foreach \x [count=\p] in {0,...,6} {
        \node[shape=circle,fill=black, scale=0.3] (\p) at (\x*360/7:1) {};};
        \foreach \x [count=\p] in {0,...,6} 
    {
    
        \draw (\x*360/7:1.4) node {$[x_{\x}]$};
    };  
    \draw (1,0) arc (0:360:1);&
    \draw[dashed, gray] (0,0) -- (0:1);
    \draw[dashed, gray] (0,0) -- (720/7:1);
    ;
    \foreach \x [count=\p] in {0,...,6} {
        \node[shape=circle,fill=black, scale=0.3] (\p) at (\x*720/7:1) {};};
        \foreach \x [count=\p] in {0,...,6} 
    {
    
        \draw (\x*720/7:1.4) node {$[y_{\x}]$};
    };  
    \draw (1,0) arc (0:360:1);&
    \draw[gray,dashed] (0,0) -- (0:1);
    \draw[gray,dashed] (0,0) -- (1080/7:1);
    \foreach \x [count=\p] in {0,...,6} {
        \node[shape=circle,fill=black, scale=0.3] (\p) at (\x*360/7:1) {};};
        \foreach \x [count=\p] in {0,...,6} 
    {
    
        \draw (\x*1080/7:1.4) node {$[z_{\x}]$};
    };  
    \draw (1,0) arc (0:360:1);&\\
    };
\end{tikzpicture}

    \caption{}
    \label{fig:maximal value 3}
\end{figure}

\end{proof}

\begin{lemma}\label{x3points}
If $ \{\theta_j^1  \}_{j=0}^6$ has exactly $3$ distinct points in $\mathbb{S}^1$,
\[
 |\Theta (\theta_{0},...,\theta_{6}  )  |<\dfrac{11}{45}.
\]
\end{lemma}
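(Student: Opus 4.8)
The plan is to deduce the strict bound from the fully generic case already handled in Lemma \ref{norm when donnot have same componet for x,y,z}, by a degeneration/averaging argument, and then to extract strictness from the rigidity of the maximizers.

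First I would record the only way a summand of $\Theta$ can survive. Since $Or(\alpha,\beta,\gamma)=0$ unless $\alpha,\beta,\gamma$ are pairwise distinct, the factor $Or\big(\theta^1_{\sigma(0)},\theta^1_{\sigma(1)},\theta^1_{\sigma(2)}\big)$ in (\ref{firt simplify of theta}) is nonzero only when the indices $\sigma(0),\sigma(1),\sigma(2)$ lie in three different fibres of the first coordinate. Writing $G_a,G_b,G_c\subset\{0,\dots,6\}$ for the three classes of indices sharing a common first coordinate $a,b,c\in\mathbb{S}^1$, a summand contributes only when $\{\sigma(0),\sigma(1),\sigma(2)\}$ is a transversal of $\{G_a,G_b,G_c\}$, in which case the first factor equals $\pm 1$ according to the cyclic order of $a,b,c$ alone.

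Next I would perturb the first coordinates inside each class $G_a,G_b,G_c$ into distinct points, recording an ordering $\tau$ of the points within each class; write $P_\tau$ for the resulting configuration (second and third coordinates unchanged) and $D$ for the original one. For a fixed $\sigma$, the first factor of the $\sigma$-summand is independent of $\tau$ when $\{\sigma(0),\sigma(1),\sigma(2)\}$ is a transversal, and otherwise at least two of these indices share a class, so averaging over their relative order within that class cancels the factor (the sum of $Or$ over the orderings of equal-class arguments vanishes because $Or$ is alternating). As the second and third factors do not involve the first coordinate, they are untouched, so for every $\sigma$ the $\tau$-average of its summand at $P_\tau$ equals its summand at $D$, giving
\[
\Theta(D)=\frac{1}{N}\sum_{\tau}\Theta(P_\tau),\qquad N=|G_a|!\,|G_b|!\,|G_c|!.
\]
Applying the same averaging to the second and third coordinates (whose factors use the slots $\sigma(0),\sigma(3),\sigma(4)$ and $\sigma(4),\sigma(5),\sigma(6)$, and are unaffected by perturbing the other coordinates), I may assume in addition that every $P_\tau$ has all three coordinates pairwise distinct. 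Lemma \ref{norm when donnot have same componet for x,y,z} then bounds each $|\Theta(P_\tau)|$ by $\tfrac{11}{45}$, and the triangle inequality yields $|\Theta(D)|\le\tfrac{11}{45}$.

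Finally, the strict inequality is the real content. Equality $|\Theta(D)|=\tfrac{11}{45}$ forces every $\Theta(P_\tau)$ to equal one common value $\pm\tfrac{11}{45}$, i.e. every within-class reordering of the first coordinate must remain a maximizer. Since the first coordinate takes exactly three values, the pigeonhole principle produces a class of size at least three, hence a pair of fully-perturbed configurations $P_\tau,P_{\tau'}$ differing only by a transposition of two first-coordinate-adjacent points inside one class; such a transposition flips precisely the first factors of those summands whose slots $\sigma(0),\sigma(1),\sigma(2)$ contain both swapped points. The hard part will be to certify that this perturbation genuinely lowers $\Theta$, so that the whole swap-orbit cannot consist of maximizers. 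I expect to settle this either from the explicit list of maximizing orders produced by Algorithm \ref{alg for outcome} in the proof of Lemma \ref{norm when donnot have same componet for x,y,z}, or, in keeping with the paper's computational style, by rerunning that algorithm with the first-coordinate sign matrix replaced by the transversal indicator carrying the cluster signs and verifying that every one of the finitely many outputs is strictly below $\tfrac{11}{45}$. Establishing this strictness, rather than the mere bound, is the main obstacle.
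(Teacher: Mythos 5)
Your averaging identity is correct and gives a clean route to the non-strict bound: resolving each cluster of coincident first coordinates into distinct nearby points, the transversal summands are unchanged while the non-transversal ones average to zero over within-cluster orderings (and the second and third factors are untouched), so $\Theta(D)=\frac{1}{N}\sum_\tau\Theta(P_\tau)$ and Lemma \ref{norm when donnot have same componet for x,y,z} yields $|\Theta(D)|\le\frac{11}{45}$. But the lemma asserts a \emph{strict} inequality, and that is exactly the part you leave open; as written, the proposal does not prove the statement. Your rigidity sketch (equality forces every $P_\tau$ to be a maximizer, then rule this out via a within-cluster transposition) needs a characterization of the full set of maximizers of Lemma \ref{norm when donnot have same componet for x,y,z}, which that lemma does not supply --- its algorithm outputs the maximal value and one maximizing configuration, not the list of all order pairs attaining $\pm\frac{11}{45}$ --- so the decisive step is neither proved nor reduced to a computation you have actually specified and run.

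The paper avoids this difficulty by proving strictness directly through counting. It splits according to the multiplicity pattern of the three first-coordinate values, $(1,1,5)$, $(1,2,4)$, $(1,3,3)$, $(2,2,3)$. In the first two cases the fraction of permutations $\sigma$ for which $\{\sigma(0),\sigma(1),\sigma(2)\}$ is a transversal of the clusters is already $\frac{1}{7}$ resp. $\frac{8}{35}$, both below $\frac{11}{45}$, so the trivial bound $|$summand$|\le 1$ suffices. In the patterns $(1,3,3)$ and $(2,2,3)$ the transversal fraction ($\frac{9}{35}$ resp. $\frac{12}{35}$) exceeds $\frac{11}{45}$, and the paper invokes Claim \ref{2/3claim} --- at least one third of the surviving items vanish or cancel in pairs --- to cut these down to $\frac{6}{35}$ and $\frac{8}{35}$. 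Your transversal observation is the same starting point as the paper's, and your averaging argument is arguably more conceptual for the $\le$ part, but without either an analogue of Claim \ref{2/3claim} or a verified maximizer-rigidity computation, the strict inequality remains a genuine gap.
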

\begin{proof}
By (\ref{firt simplify of theta}), we have 
\begin{equation}\notag
\begin{split}
    \Theta (\theta_{0},...,\theta_{6}  )&=%
    \dfrac{1}{7!}\sum_{\sigma\in Sym (7  )}sign (\sigma  )%
    Or\big([x_{\sigma (0  )}],[x_{\sigma (1  )}],%
    [x_{\sigma (2  )}]\big)\\
    &\ \ \ \ \ \ \ \ \ \ \ \ \ Or\big([y_{\sigma (0  )}],[y_{\sigma (3  )}],%
    [y_{\sigma (4  )}]\big)%
    Or\big([z_{\sigma (4  )}],[z_{\sigma (5  )}],%
    [z_{\sigma (6  )}]\big)\\
    &=\dfrac{1}{7!}\sum_{\substack{a<b<c\\\{a,b,c\}\in\{0,...,6\}}}%
    2\cdot Or\big([x_a],[x_b],[x_c]\big)%
    \sum_{\substack{\{i,j,k,l\}=\{0,...,6\}\backslash\{a,b,c\}\\k<l}}\\
    &\ \ \ \ 2\cdot sign(a,b,c,i,j,k,l)\Big[Or\big([y_a],[y_i],[y_j]\big)Or\big([z_j],[z_k],[z_l]\big)+\\
    &\ \ \ \ Or\big([y_b],[y_i],[y_j]\big)Or\big([z_j],[z_k],[z_l]\big)+\\
    &\ \ \ \ Or\big([y_c],[y_i],[y_j]\big)Or\big([z_j],[z_k],[z_l]\big)\Big].
\end{split}
\end{equation}
\begin{claim}\label{2/3claim}
For three fixed element $a,b,c\in\{0,...,6\}$ satisfying $a<b<c$ there are at least $\dfrac{1}{3}$ of the items in
\begin{equation}\notag
    \begin{split}
    &Or\big([x_a],[x_b],[x_c]\big)%
    \sum_{\substack{\{i,j,k,l\}=\{0,...,6\}\backslash\{a,b,c\}\\k<l}}2\cdot sign(a,b,c,i,j,k,l)\\
    &\ \ \ \ \ \ \ \ \ \ \Big[Or\big([y_a],[y_i],[y_j]\big)Or\big([z_j],[z_k],[z_l]\big)+Or\big([y_b],[y_i],[y_j]\big)Or\big([z_j],[z_k],[z_l]\big)+\\
    &\ \ \ \ \ \ \ \ \ \ Or\big([y_c],[y_i],[y_j]\big)Or\big([z_j],[z_k],[z_l]\big)\Big]
    \end{split}
\end{equation}
that vanish or cancel with each other.
\end{claim}
\begin{claimproof}

Fix $\{a,b,c\}$ in $\{0,...,6\}$, and denote $\{0,...,6\}\backslash\{a,b,c\}$ by $\{i_0,...,i_3\}$. We can assume that $z_0\leq z_1\leq...\leq z_6$ as $\Theta$ is alternating. Assuming $i_0<...<i_3$, we have $z_{i_0}\leq...\leq z_{i_3}$. 

If $\theta_{i_0}^3$, $\theta_{i_1}^3$, $\theta_{i_2}^3$ and $\theta_{i_3}^3$ are four pairwise distinct points in $\mathbb{S}^1$, we have 
\begin{equation}\label{1/3 vanishing}
    \begin{split}
    &\ \ \ Or\big([x_a],[x_b],[x_c]\big)%
    \sum_{\substack{\{i,j,k,l\}=\{0,...,6\}\backslash\{a,b,c\}\\k<l}}2\cdot sign(a,b,c,i,j,k,l)\\
    &\ \ \ \ \ \ \ \ \ \ \ \ \ \ \ \ \ \ \ \ \ \ \ \ \ \ \ \ \ \ \ \ \ \ \ \ \ \ \ \ \ \ \ \ \ Or\big([y_a],[y_i],[y_j]\big)Or\big([z_j],[z_k],[z_l]\big)\\
    &=Or\big([x_a],[x_b],[x_c]\big)\sum_{\substack{\tau\in Sym(i_0,...,i_3)\\\tau(i_2)<\tau(i_3)}}%
    2\cdot sign(a,b,c,\tau)sign(\tau(i_1),\tau(i_{2}),\tau(i_3))\\
    &\ \ \ \ \ \ \ \ \ \ \ \ \ \ \ \ \ \ \ \ \ \ \ \ \ \ \ \ \ \ \ \ \ \ \ \ \ \  Or\big([y_{a}],[y_{\tau(i_0)}],[y_{\tau(i_1)}]\big)\\
    &=Or\big([x_a],[x_b],[x_c]\big)\sum_{\substack{\tau\in Sym(i_0,...,i_3)\\\tau(i_2)<\tau(i_3)}}2\cdot(-1)^{a+b+c+\alpha(i_0)}Or\big([y_a],[y_{\tau(i_0)}],[y_{\tau(i_1)}]\big),
    \end{split}
\end{equation}
where for $\sigma(i_0)=i_{p}$, $\alpha\big(\sigma(i_0)\big)=p$. Notice that the items corresponding to $\sigma= (a,b,c,i_3,i_1,i_0,i_2  )$ cancels the items corresponding to $\sigma= (a,b,c,i_1,i_3,i_0,i_2  )$ and the items corresponding to $\sigma= (a,b,c,i_2,i_0,i_1,i_3  )$ cancels the items corresponding to $\sigma= (a,b,c,i_0,i_2,i_1,i_3  )$. Therefore at least $\dfrac{1}{3}$ of the items in (\ref{1/3 vanishing}) cancel with each other.

If $\theta_{i_0}^3$, $\theta_{i_1}^3$, $\theta_{i_2}^3$ and $\theta_{i_3}^3$ have at least two points coincide with each other, we have $\dfrac{\binom{1}{2}}{\binom{3}{4}}=\dfrac{1}{2}$ of the items in (\ref{1/3 vanishing}) vanish. The same goes for 
\begin{equation}\notag
    \begin{split}
        &\ \ \ Or\big([x_a],[x_b],[x_c]\big)%
    \sum_{\substack{\{i,j,k,l\}=\{0,...,6\}\backslash\{a,b,c\}\\k<l}}2\cdot sign(a,b,c,i,j,k,l)\\
    &\ \ \ \ \ \ \ \ \ \ \ \ \ \ \ \ \ \ \ \ \ \ \ \ \ \ \ \ \ \ \ \ \ \ \ \ \ \ \ \ \ \ \ \ \ Or\big([y_b],[y_i],[y_j]\big)Or\big([z_j],[z_k],[z_l]\big)
    \end{split}
\end{equation}
and
\begin{equation}\notag
    \begin{split}
        &\ \ \ Or\big([x_a],[x_b],[x_c]\big)%
    \sum_{\substack{\{i,j,k,l\}=\{0,...,6\}\backslash\{a,b,c\}\\k<l}}2\cdot sign(a,b,c,i,j,k,l)\\
    &\ \ \ \ \ \ \ \ \ \ \ \ \ \ \ \ \ \ \ \ \ \ \ \ \ \ \ \ \ \ \ \ \ \ \ \ \ \ \ \ \ \ \ \ \ Or\big([y_c],[y_i],[y_j]\big)Or\big([z_j],[z_k],[z_l]\big).
    \end{split}
\end{equation}
Hence we get the required result.
\end{claimproof}
Now that we have these facts, we can prove this lemma in 4 cases.
\begin{case}\label{case1}
There are $1$, $1$ and $5$ points in $\{\theta_j^1\}_{j=0}^6$ that respectively have the same values in $\mathbb{S}^1$.
\end{case}
We notice that $Or\big([x_0],[x_1],[x_2]\big)=0$ when there exists $i\neq j$ such that $[x_i]=[x_j]$. Hence there are only $\dfrac{\binom{1}{5}}{\binom{3}{7}}=\dfrac{1}{7}<\dfrac{11}{45}$ of the items in (\ref{def of Theta}) which may not vanish. Therefore this case stands.
\begin{case}
There are $1$, $2$ and $4$ points in $\{\theta_j^1\}_{j=0}^6$ that respectively have the same values in $\mathbb{S}^1$.
\end{case}
Similarly to Case \ref{case1}, there are only $\dfrac{\binom{1}{2}\cdot\binom{1}{4}}{\binom{3}{7}}=\dfrac{8}{35}<\dfrac{11}{45}$ of the items in (\ref{def of Theta}) which may not vanish. Therefore this case stands.
\begin{case}
There are $1$, $3$ and $3$ points in $\{\theta_j^1\}_{j=0}^6$ that respectively have the same values in $\mathbb{S}^1$.
\end{case}\label{case3}
According to Claim \ref{2/3claim}, there are only $\dfrac{\binom{1}{3}\cdot\binom{1}{3}}{\binom{3}{7}}\times\dfrac{2}{3}=\dfrac{6}{35}<\dfrac{11}{45}$ of the items in (\ref{def of Theta}) which may not vanish or cancel with each other. Therefore this case stands.
\begin{case}
There are $2$, $2$ and $3$ points in $\{\theta_j^1\}_{j=0}^6$ that respectively have the same values in $\mathbb{S}^1$.
\end{case}
Similarly to Case \ref{case3}, there are only $\dfrac{\binom{1}{2}\cdot\binom{1}{2}\cdot\binom{1}{3}}{\binom{3}{7}}\times\dfrac{2}{3}=\dfrac{8}{35}<\dfrac{11}{45}$ of the items in (\ref{def of Theta}) which may not vanish or cancel with each other. Therefore this case stands.

In conclusion, this lemma is proved.

\end{proof}
\begin{lemma}\label{x4points}
If $ \{\theta_j^1  \}_{j=0}^6$ has at least $4$ distinct points in $\mathbb{S}^1$,
\[
 |\Theta (\theta_{0},...,\theta_{6}  )  |<\dfrac{11}{45}.
\]
\end{lemma}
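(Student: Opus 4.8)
The plan is to reduce the lemma to the generic situation of Lemma~\ref{norm when donnot have same componet for x,y,z}. If all three of the sets $\{\theta_j^1\}$, $\{\theta_j^2\}$, $\{\theta_j^3\}$ ($j=0,\dots,6$) are pairwise distinct, then Lemma~\ref{norm when donnot have same componet for x,y,z} already bounds $|\Theta|$ by $\frac{11}{45}$; since that bound is attained (Figure~\ref{fig:maximal value 3}), the content of the present lemma lies in the complementary range, where at least one coordinate has a coincidence. The mechanism I would use throughout is a splitting identity that expresses $\Theta$ at a coincidence as the average of its two one-sided resolutions, and the hypothesis ``at least $4$ distinct points in the first coordinate'' serves only to guarantee that we are not in the already-settled generic case.

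First I would establish the splitting identity. Suppose $\theta_a^1=\theta_b^1$ for some $a\neq b$, and let $\Theta^{+}$ and $\Theta^{-}$ be the values obtained from $\Theta$ by displacing $\theta_a^1$ infinitesimally to either side of its coincidence class in $\mathbb{S}^1$, keeping every other coordinate fixed. Then
\[
\Theta(\theta_0,\dots,\theta_6)=\frac12\left(\Theta^{+}+\Theta^{-}\right).
\]
This is verified summand by summand in the reduced expression (\ref{firt simplify of theta}): a summand whose first-coordinate factor $Or\big([x_{\sigma(0)}],[x_{\sigma(1)}],[x_{\sigma(2)}]\big)$ does not involve the index $a$ is unchanged; a summand involving $a$ together with a second member of its coincidence class has a vanishing factor at the coincidence and picks up opposite orientations in $\Theta^{+}$ and $\Theta^{-}$, so these cancel; a summand involving $a$ but no other member of its class is insensitive to the displacement and agrees in $\Theta^{+}$ and $\Theta^{-}$. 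Because the second- and third-coordinate factors are untouched, the identity follows, and the same argument applies verbatim to a coincidence in the second or third coordinate.

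Then I would iterate. Each application of the identity replaces a configuration by an average of two configurations with strictly fewer coincidences in the chosen coordinate, and does not increase $|\Theta|$. Resolving every coincidence in all three coordinates exhibits $\Theta$ as a convex combination of its values on configurations whose three coordinates are each pairwise distinct; by Lemma~\ref{norm when donnot have same componet for x,y,z} each such value has absolute value at most $\frac{11}{45}$, whence $|\Theta|\le\frac{11}{45}$. For the strict inequality, note that the standing hypothesis forces at least one coincidence, so the convex combination is nontrivial; equality $|\Theta|=\frac{11}{45}$ would force every all-distinct configuration entering it to attain $\pm\frac{11}{45}$ with one common sign. In particular, splitting the first coincidence, the configurations giving $\Theta^{+}$ and $\Theta^{-}$—which share the cyclic orders of the second and third coordinates and whose first-coordinate cyclic orders differ by the adjacent transposition of the two split points—would both be maximizers of the same sign.

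The principal obstacle is exactly this last point: showing that no two configurations related by a single adjacent transposition in one coordinate (with the other two coordinates fixed) can simultaneously maximize $|\Theta|$ with matching sign. I would settle it from the enumeration already built for Lemma~\ref{norm when donnot have same componet for x,y,z}: Algorithm~\ref{alg for outcome} computes the complete list $O_{360}$ of values of $\Theta$ over all combinatorial types modulo the rotations and reflections of (\ref{g pres norm}), so it suffices to record which types attain $\frac{11}{45}$ and to check the transposition-incompatibility on that finite list. I would also stress why a direct count in the style of Lemma~\ref{x3points} cannot replace this reduction: for every partition of $7$ into $4$, $5$, or $6$ value-groups, the proportion of summands of (\ref{firt simplify of theta}) whose first-coordinate factor survives, even after granting the at-least-one-third cancellation of Claim~\ref{2/3claim}, already exceeds $\frac{11}{45}$; indeed even the most favorable partition $4,1,1,1$ leaves $\frac{26}{105}=\frac{78}{315}>\frac{77}{315}=\frac{11}{45}$, so the averaging reduction, not a term count, is the essential ingredient.
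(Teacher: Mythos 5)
Your averaging reduction is a genuinely different route from the paper's, and its first half is sound. The splitting identity $\Theta=\frac12(\Theta^{+}+\Theta^{-})$ does hold: in each summand of (\ref{firt simplify of theta}) the relevant $Or$ factor either ignores the displaced index, vanishes on both one-sided resolutions, is insensitive to an infinitesimal displacement, or flips sign between the two sides; iterating over all coincidence classes in all three coordinates exhibits $\Theta$ as a convex combination of values on fully generic configurations, so Lemma \ref{norm when donnot have same componet for x,y,z} gives $|\Theta|\leq\frac{11}{45}$ cleanly. Your observation that a term count in the style of Lemma \ref{x3points} cannot reach $\frac{11}{45}$ even for the most favorable partition $4+1+1+1$ (since $\frac{2}{3}\cdot\frac{13}{35}=\frac{26}{105}>\frac{11}{45}$) is correct and explains why the paper itself abandons counting and reruns Algorithm \ref{alg for outcome} for exactly these cases.

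The gap is the strict inequality, which is the entire content of the lemma. You reduce it to the assertion that no two all-distinct combinatorial types that differ by an adjacent transposition in a single coordinate (the other two coordinates fixed) can both attain $\pm\frac{11}{45}$ with the same sign, and you propose to verify this on the list $O_{360}$ --- but the verification is not carried out, and neither your argument nor the paper records the set of maximizers, so nothing currently rules out such a pair. Two further points: when a coincidence class has $k\geq 3$ elements, the two one-sided resolutions of a single point differ by moving it across a block of $k-1$ coincident points, i.e.\ by $k-1$ adjacent transpositions after full resolution, so the rigidity you need concerns block moves, not only adjacent transpositions; and the check must be set up against the normalization underlying $O_{360}$ (the $x_i$ ordered, $y_0=z_0=0$, $y_1<y_2$, $z_1<z_2$, plus the symmetries of (\ref{g pres norm})), since a transposition generally lands in a different normalized representative. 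The paper sidesteps the rigidity question entirely: for each multiplicity pattern of the first coordinate it reruns Algorithm \ref{alg for outcome} with a modified sign vector and reads off maxima ($\frac{2}{15}$, $\frac{7}{45}$, $\frac{8}{45}$, $\frac{1}{5}$, $\frac{2}{9}$) that are numerically below $\frac{11}{45}$, so strictness comes for free at the cost of a larger computation. Your approach would be preferable --- and would likely generalize to $(\mathbb{H}^2)^n$ --- if one adds the single fact that the maximizer of Lemma \ref{norm when donnot have same componet for x,y,z} is unique up to symmetry, or more generally that no two maximizers of equal sign are related by such a block move; as written, that step is missing and the proof is incomplete.
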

\begin{proof}
We prove this lemma in $3$ cases.
\begin{case}
There are $1$, $1$, $1$ and $4$ points in $\{\theta_j^1\}_{j=0}^6$ that respectively have the same values in $\mathbb{S}^1$.
\end{case}
When there are $2$ points in $\{\theta_j^2\}_{j=0}^6$ that have the same value in $\mathbb{S}^1$, assuming that $x_0=...= x_3<x_4<x_5<x_6<1$, the worst case is $y_4=y_5$. Taking a closer look at the proof of Claim \ref{2/3claim}, we have that there are at most  
\[
\dfrac{(1+4)\times\frac{2}{3}+(2+4)\times\frac{2}{3}\times\frac{8+2}{12}}{\binom{3}{7}}=\dfrac{2}{9}<\dfrac{11}{45}
\]
of the items in (\ref{def of Theta}) that may not vanish or cancel with each other. Therefore we can assume that $\{\theta_j^i\}_{j=0}^6$ are pairwise distinct for $i=2$ and $3$, respectively. We use Algorithm \ref{alg for outcome} to prove this. Let $(x_1,...,x_7)$ be $(1,1,1,1,2,3,4)$. Define a $1260\times 1$ matrix $P_{sign1114}$ by
\[
 (P_{sign1114}  )_{i,1}=sign(S_{1,i},...,S_{7,i})\cdot sign(x_{\scriptscriptstyle S_{1,i}},x_{\scriptscriptstyle S_{2,i}},x_{\scriptscriptstyle S_{3,i}}).
\]
Here we input $P_y=P_{y360}$, $P_{z}=P_{z360}$ and $P_{sign}=P_{sign1114}$. Denote the corresponding output by $O$. The maximal value of entries of $O$ is $\dfrac{2}{15}$ which is smaller than $\dfrac{11}{45}$. Therefore this case stands.
\begin{case}
There are $1$, $1$, $2$ and $3$ points in $\{\theta_j^1\}_{j=0}^6$ that respectively have the same values in $\mathbb{S}^1$.
\end{case}
Let $P_2$ be a $7\times \dfrac{7!}{2!}$ (i.e., $7\times 2520$) matrix which each column represents a permutation of $(1,1,2,3,4,5,6)$. Let $P_3$ be a $7\times \dfrac{7!}{2!\times2!}$ (i.e., $7\times 1260$) matrix which each column represents a permutation of $(1,1,2,2,3,4,5)$. Let $P_4$ be a $7\times \dfrac{7!}{2!\times2!}$ (i.e., $7\times 1260$) matrix which each column represents a permutation of $(1,1,2,3,3,4,5)$. Let $P_5$ be a $7\times \dfrac{7!}{3!}$ (i.e., $7\times 840$) matrix which each column represents a permutation of $(1,1,1,2,3,4,5)$. Let $P_6$ be a $7\times \dfrac{7!}{2!\times2!\times 2!}$ (i.e., $7\times 630$) matrix which each column represents a permutation of $(1,1,2,2,3,3,4)$. Let $P_7$ be a $7\times \dfrac{7!}{3!\times2!}$ (i.e., $7\times 420$) matrix which each column represents a permutation of $(1,1,1,2,2,3,4)$. Let $P_8$ be a $7\times \dfrac{7!}{2!\times2!}$ (i.e., $7\times 420$) matrix which each column represents a permutation of $(1,1,1,2,3,3,4)$. Define a $7\times 7710$ matrix $P_r$ by stacking $P$, $P_2$,...,$P_7$ and $P_8$ vertically.
Let $P_{yr}$ be 
\[
 (P_{yr}  )_{k,l}=sign\big( (P_{r}  )_{(S_{1,k}),l}, (P_{r}  )_{(S_{4,k}),l}, (P_{r}  )_{(S_{5,k}),l}\big).
\]
Let $P_{zr}$ be 
\[
 (P_{zr}  )_{k,l}=sign\big( (P_{r}  )_{(S_{5,k}),l}, (P_{r}  )_{(S_{6,k}),l}, (P_{r}  )_{(S_{7,k}),l}\big).
\]
Let $(x_1,...,x_7)$ be $(1,1,1,2,2,3,4)$. Define a $1260\times 1$ matrix $P_{sign1123}$ by
\[
 (P_{sign1123}  )_{i,1}=sign(S_{1,i},...,S_{7,i})\cdot sign(x_{S_{1,i}},x_{S_{2,i}},x_{S_{3,i}}).
\]
Input $P_y=P_{yr}$, $P_{z}=P_{zr}$ and $P_{sign}=P_{sign1123}$ to get the corresponding output $O1123$. Define $(x_1,...,x_7)=(1,1,1,2,3,3,4)$. Define a $1260\times 1$ matrix $P_{sign1213}$ by
\[
 (P_{sign1213}  )_{i,1}=sign(S_{1,i},...,S_{7,i})\cdot sign(x_{S_{1,i}},x_{S_{2,i}},x_{S_{3,i}}).
\]
Input $P_y=P_{yr}$, $P_{z}=P_{zr}$ and $P_{sign}=P_{sign1213}$ to get the corresponding output $O1213$. Notice that the maximal values of  entries of $O1123$ and $O1213$ are both $\dfrac{7}{45}$. Notice that
\begin{equation}\label{change the position of xyz}
    \begin{split}
            \Theta\Big(\big([x_0],[y_0],[z_0]\big),...,\big([x_6],[y_6],[z_6]\big)\Big)%
    &=\Theta\Big(\big([y_0],[x_0],[z_0]\big),...,\big([y_6],[x_6],[z_6]\big)\Big)\\
    &=\Theta\Big(\big([x_0],[z_0],[y_0]\big),...,\big([x_6],[z_6],[y_6]\big)\Big).
    \end{split}
\end{equation}
Therefore this case stands.
\begin{case}
All other cases which are not include above.
\end{case}
Using equation (\ref{g pres norm}) and (\ref{change the position of xyz}), we only need to apply Algorithm \ref{alg for outcome} as below.
Let $(x^i_1,...,x^i_7)$ be $(1,1,2,2,3,3,4)$, $(1,1,1,2,3,4,5)$, $(1,1,2,2,3,4,5)$ and $(1,1,2,3,4,5,6)$ respectively for $i=1,...,4$. Define four $1260\times 1$ matrices $\{P_{sign}^i\}_{i=1}^4$ by
\[
 (P_{sign}^i  )_{j,1}=sign(S_{1,j},...,S_{7,j})\cdot sign\big(x^i_{S_{1,j}},x^i_{S_{2,j}},x^i_{S_{3,j}}\big)
\]
respectively for $i=1,...,4$. Input $P_y=P_{yr}$, $P_{z}=P_{zr}$ and $P_{sign}=P_{sign}^i$ to get the outputs $O^i=O$ respectively for $i=1,...,4$. The maximal values of entries of $O^i$ are $\dfrac{8}{45}$, $\dfrac{8}{45}$, $\dfrac{1}{5}$ and $\dfrac{2}{9}$ for $i=1,...,4$, respectively. Therefore this case stands.
\end{proof}

Consequently, Proposition \ref{Prop: norm of Theta} is proved. Therefore, we get the desired estimation $\|\omega_{\mathbb{H}^2\times\mathbb{H}^2\times\mathbb{H}^2}\|_{\infty}\leq\dfrac{11\pi^3}{45}$. As a conclusion, Theorem \ref{Thm: norm of omega} is proved. 
Furthermore, Algorithm \ref{alg for outcome} can easily be generalized to provide an algorithm to compute the lower bound of closed Riemannian manifolds covered by $\big(\mathbb{H}^2\big)^n$.

\section{Conjectures}\label{Section: Conjectures}
Denote the $4$-cocycle representing $\omega_{\mathbb{H}^2\times\mathbb{H}^2}$ in \cite[Proposition 7]{bucher2008simplicial} by $\Theta'$. Notice that when taking value $(\theta_0,...,\theta_4)=\Big( \big([x_0],[y_0]\big),...,\big([x_4],[y_4]\big)\Big)$ as in Figure \ref{fig:maximal value 2}, $\Theta'$ attains its maximal volume $\dfrac{2}{3}$. 
\begin{figure}[H] 
    \begin{tikzpicture}
    
    \matrix[
       row sep=0.3cm,column sep=0.5cm] {
    \draw[dashed, gray] (0,0) -- (0:1);
    \draw[dashed, gray] (0,0) -- (360/5:1);
    \foreach \x [count=\p] in {0,...,4} {
        \node[shape=circle,fill=black, scale=0.3] (\p) at (\x*360/5:1) {};};
        \foreach \x [count=\p] in {0,...,4} 
    {
    
        \draw (\x*360/5:1.4) node {$[x_{\x}]$};
    };  
    \draw (1,0) arc (0:360:1);&
    \draw[dashed, gray] (0,0) -- (0:1);
    \draw[dashed, gray] (0,0) -- (720/5:1);
    \foreach \x [count=\p] in {0,...,4} {
        \node[shape=circle,fill=black, scale=0.3] (\p) at (\x*720/5:1) {};};
        \foreach \x [count=\p] in {0,...,4} 
    {
    
        \draw (\x*720/5:1.4) node {$[y_{\x}]$};
    };  
    \draw (1,0) arc (0:360:1);&\\
    };
\end{tikzpicture}
\caption{}\label{fig:maximal value 2}
\end{figure}
We notice a pattern comparing Figure \ref{fig:maximal value 3} with \ref{fig:maximal value 2} as following.
\begin{conjecture}
Define a $2n$-form $\Theta_n$ in $C_b^{2n}(\mathbb{T}^n,\mathbb{R})$ as
\begin{equation}\notag
    \Theta_n (\theta_{0},...,\theta_{2n}  )=%
    \dfrac{\pi^n}{(2n+1)!}\sum_{\sigma\in Sym (2n+1  )}sign (\sigma  )%
    \prod_{i=1}^{n}Or\big(\theta_{2i-2}^{i},\theta_{2i-1}^{i},\theta_{2i}^{i}\big)
\end{equation}
for all $\theta_{i}= (\theta_{i}^{1},...,\theta_{i}^{n}  )$ in $\mathbb{T}^{n}$, $i=0,...,2n$. Then $\|\Theta_n\|_{\infty}=\Theta_n (\theta_0,...,\theta_{2n}  )$, where $\theta^k_i=e^{\frac{2(ki)\pi\sqrt{-1}}{2n+1}}$ for $k=1,...,n$ and $i=0,...,2n$.
\end{conjecture}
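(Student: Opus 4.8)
The plan is to follow the architecture of the $n=3$ proof (Lemmas \ref{norm when donnot have same componet for x,y,z}--\ref{x4points} together with Claim \ref{2/3claim}) and to isolate the one genuinely new ingredient that blocks a direct machine verification for general $n$. Call a configuration $(\theta_0,\dots,\theta_{2n})\in(\mathbb{T}^n)^{2n+1}$ \emph{generic} if in each coordinate $i=1,\dots,n$ the points $\{\theta_j^i\}_{j=0}^{2n}$ are pairwise distinct. On the generic locus each factor $Or(\theta_a^i,\theta_b^i,\theta_c^i)$ equals the sign of the cyclic order of the three points, so a generic configuration is encoded by $n$ cyclic orders of $\{0,\dots,2n\}$, one per coordinate. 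After normalizing each order by the rotation/reflection invariance (the analogue of (\ref{g pres norm})), the value becomes the purely combinatorial quantity
\[
\Theta_n(\theta_0,\dots,\theta_{2n})=\frac{\pi^n}{(2n+1)!}\sum_{\sigma\in Sym(2n+1)}sign(\sigma)\prod_{i=1}^{n}\varepsilon_i\bigl(\sigma(2i-2),\sigma(2i-1),\sigma(2i)\bigr),
\]
where $\varepsilon_i$ reads off the orientation sign from the $i$-th order. Thus, restricted to generic configurations, computing $\|\Theta_n\|_\infty$ is a finite optimization over tuples of orders, exactly the problem Algorithm \ref{alg for outcome} solves when $n=3$.

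First I would prove that the degenerate (non-generic) configurations never exceed the generic maximum, generalizing Lemmas \ref{x3points} and \ref{x4points}. The mechanism is the same: a coincidence in coordinate $i$ annihilates every summand of (\ref{def of Theta}) in which the corresponding $Or$-factor sees a repeated point, and the pairing of Claim \ref{2/3claim} cancels a further definite fraction. I would package these into one counting lemma bounding, for each coordinate-wise multiplicity pattern, the proportion of surviving non-cancelling summands by an explicit product of multinomial ratios and $\tfrac{2}{3}$-type factors, strictly below the conjectured value. Here a subtlety must be confronted that is invisible for $n\le 3$: the proposed maximizer $\theta_i^k=e^{2(ki)\pi\sqrt{-1}/(2n+1)}$ is itself generic precisely when $2n+1$ is prime (so that every speed $k\in\{1,\dots,n\}$ is coprime to $2n+1$), whereas for composite $2n+1$ some coordinate has coinciding points; in that case the degenerate analysis must be sharp enough to keep the maximizer in view rather than merely discarding all degenerate configurations.

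The main obstacle is the third step: proving that the symmetric configuration maximizes the finite optimization above for \emph{every} $n$, not just verifying it by machine for small $n$. The number of order-tuples grows super-exponentially, of order $\bigl((2n)!/2\bigr)^{\,n-1}$ after normalization, so the brute-force enumeration underlying Algorithm \ref{alg for outcome} is hopeless beyond the smallest cases and must be replaced by structure. \textbf{This optimality statement is the crux.} The route I would pursue is a coordinate-by-coordinate rearrangement argument: fixing the orders in all but one coordinate and varying the last, show that the combinatorial sum is maximized by the arithmetic-progression order (sorting the indices by $kj \bmod (2n+1)$) prescribed by the symmetric configuration, then iterate. The single-coordinate step amounts to a sign-rearrangement (or positivity/determinantal) inequality for $\sum_{\sigma}sign(\sigma)\prod_i\varepsilon_i(\cdots)$, and I expect its proof to hinge on recognizing the roots-of-unity configuration through a Fourier-analytic interpretation of the Euler cocycle $Or$, which makes the reinforcement of summands at equally spaced points transparent. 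Once optimality is secured, the explicit value of $\|\Theta_n\|_\infty$ follows by evaluating $\Theta_n$ directly at $\theta_i^k=e^{2(ki)\pi\sqrt{-1}/(2n+1)}$, extending the sequence $\pi,\ \tfrac{2}{3}\pi^2,\ \tfrac{11}{45}\pi^3$ to general $n$.
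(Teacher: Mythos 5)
The statement you are proving is stated in the paper only as a conjecture; the authors give no proof of it, so there is nothing to compare your argument against except the $n=3$ case, which they settle by explicit machine enumeration (Algorithm \ref{alg for outcome} together with Lemmas \ref{norm when donnot have same componet for x,y,z}--\ref{x4points}). Your proposal is a reasonable research plan that correctly reverse-engineers the structure of that $n=3$ proof, but it is not a proof: the step you yourself flag as ``the crux'' --- that the roots-of-unity configuration maximizes the combinatorial sum over all tuples of cyclic orders for every $n$ --- is only described, not established. The coordinate-by-coordinate rearrangement inequality and the Fourier-analytic interpretation of $Or$ are named as hoped-for mechanisms, but no inequality is actually proven, and there is no reason given why the single-coordinate optimization should be solved by the arithmetic-progression order, nor why iterating such local moves would reach the global maximum rather than a suboptimal fixed point. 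Since this is exactly the content of the conjecture, the proposal leaves the entire statement open.

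There is also an internal inconsistency in your treatment of degenerate configurations. You propose a counting lemma showing that every non-generic configuration yields a value \emph{strictly below} the conjectured maximum, and then correctly observe that for composite $2n+1$ the conjectured maximizer $\theta_i^k=e^{2(ki)\pi\sqrt{-1}/(2n+1)}$ is itself non-generic (the coordinate $k=p$ for a prime factor $p\le n$ of $2n+1$ has coinciding points). These two assertions cannot both hold: either the counting lemma as packaged is false for composite $2n+1$, or the conjectured maximizer is not actually a maximizer there. You acknowledge the tension but do not resolve it, and resolving it would require either a sharper degenerate analysis that singles out the specific coincidence pattern of the proposed maximizer, or a proof that the supremum is attained only on the generic locus when $2n+1$ is composite --- in which case the conjecture itself might fail for such $n$. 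Finally, the generalization of the paper's Case-by-Case fraction bounds (the $\tfrac{1}{7}$, $\tfrac{8}{35}$, $\tfrac{6}{35}$, \dots{} computations and the $\tfrac{2}{3}$-cancellation of Claim \ref{2/3claim}) to arbitrary multiplicity patterns in $2n+1$ points is asserted to give bounds ``strictly below the conjectured value'' without any estimate showing the product of multinomial ratios stays below a quantity that is not even known in closed form for general $n$.
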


We already have $\|\omega_{(\mathbb{H}^2)^n}\|_{\infty}\leq\|\Theta_n\|_{\infty}$ and the proportionality principle for locally symmetric spaces. We can ask further that whether this inequality is actually equality.
\begin{conjecture}
Let $M$ be a closed oriented manifold covered by $\big(\mathbb{H}^2\big)^n$. Then the simplicial volume $\|M\|=\dfrac{\mathrm{Vol}(M)}{\|\Theta_n\|_{\infty}}$.
\end{conjecture}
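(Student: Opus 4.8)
The plan is to reduce the conjecture to a single seminorm inequality and then to supply that inequality by a straightening-and-smearing construction. By the Bucher--Karlsson proportionality formula recalled in the introduction, for a closed $M$ covered by $(\mathbb{H}^2)^n$ one has the \emph{exact} identity $\|M\| = \mathrm{Vol}(M)/\|\mathcal{J}(\omega_{(\mathbb{H}^2)^n})\|_\infty$, the denominator being the seminorm of the volume class in the continuous bounded cohomology of $G=(\mathrm{PSL}_2\mathbb{R})^n$. The Proposition of Section~\ref{Section: rep of volume class}, generalized from $n=3$ to arbitrary $n$ in the obvious way, identifies this class with $[\Theta_n]$; since $\Theta_n$ is one representative of the class, $\|\mathcal{J}(\omega_{(\mathbb{H}^2)^n})\|_\infty \le \|\Theta_n\|_\infty$ holds automatically, and hence the lower bound $\|M\| \ge \mathrm{Vol}(M)/\|\Theta_n\|_\infty$ comes for free. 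The entire content of the conjecture is therefore the reverse inequality $\|\mathcal{J}(\omega_{(\mathbb{H}^2)^n})\|_\infty \ge \|\Theta_n\|_\infty$, i.e.\ that $\Theta_n$ is a \emph{norm-minimizing} cocycle for the volume class, equivalently the matching upper bound $\|M\| \le \mathrm{Vol}(M)/\|\Theta_n\|_\infty$.

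First I would make the reduction precise for general $n$ by repeating the argument of Section~\ref{Section: Est of norm}: using amenability of $P^n$ together with \cite[Corollary 7.5.9.]{monod2001continuous}, one obtains an isometric isomorphism $H^{2n}_{cb}(G,\mathbb{R}) \cong H^{2n}_\delta\big(C_b^*(\mathbb{T}^n,\mathbb{R})^G\big)$ carrying $[\Theta_{n,\theta}]$ to $[\Theta_n]$, so that $\|\mathcal{J}(\omega_{(\mathbb{H}^2)^n})\|_\infty$ is literally the seminorm of $[\Theta_n]$ in the measurable model. This places both the class and its distinguished representative inside one normed complex and reduces the conjecture to showing that no coboundary can push the $\ell^\infty$-norm of $\Theta_n$ below $\|\Theta_n\|_\infty$.

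The substance is this lower bound on the seminorm, which I would obtain dually by exhibiting an efficient fundamental cycle, following Thurston's straightening-and-smearing method as adapted to $\mathbb{H}^2\times\mathbb{H}^2$ by Bucher--Karlsson. Concretely, take the equispaced extremal boundary configuration at which $\Theta_n$ attains its maximum---the points $\theta_i^k = e^{2\pi k i\sqrt{-1}/(2n+1)}$ of the preceding conjecture, depicted for $n=3$ in Figure~\ref{fig:maximal value 3}---form the straightened geodesic $2n$-simplex in $(\mathbb{H}^2)^n$ whose ideal vertices project to these points, and smear it over $G$ against the Haar measure to obtain a $G$-invariant measure cycle. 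Its pairing with $\Theta_n$ should recover $\mathrm{Vol}(M)$ while its total mass is $\mathrm{Vol}(M)/\|\Theta_n\|_\infty$; pushing it down to $M$ and approximating by genuine singular cycles (via the isometric comparison between measure homology and singular homology, cf.\ \cite{loh2007simplicial}) produces fundamental cycles of $\ell^1$-mass approaching $\mathrm{Vol}(M)/\|\Theta_n\|_\infty$. This gives $\|M\| \le \mathrm{Vol}(M)/\|\Theta_n\|_\infty$, which together with the free lower bound yields the equality.

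The hard part will be the smearing step for general $n$, for two intertwined reasons. First, one must prove that the equispaced configuration genuinely assembles into an \emph{efficient} cycle---that the straightened simplices built from these ideal points fill out the smeared current with no loss of efficiency---a fact that for $n=2$ already rested on the delicate volume computations of Bucher--Karlsson and has no evident closed-form extension to higher $n$. Second, the value $\|\Theta_n\|_\infty$ is itself still open (it is the subject of the preceding conjecture, computed by Algorithm~\ref{alg for outcome} only for $n=3$), so the efficiency of the smeared cycle cannot be verified analytically until both that maximal value and its precise maximizing configuration are pinned down. I therefore expect this conjecture to be provable only in tandem with the first: the exact norm computation must feed directly into the construction of the matching efficient cycle.
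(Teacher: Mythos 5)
This statement is one of the two conjectures the paper leaves open: the authors give no proof of it at all (they explicitly write ``if these two conjectures are true\ldots''), so there is no argument in the paper to compare yours against, and your text is also not a proof but a plan whose decisive steps you yourself flag as unresolved. What you get right is the reduction, and it is worth recording: granting Gromov's proportionality principle (equation (\ref{equ: Gromov's proportionality principle})) and the evident generalization of the paper's Section \ref{Section: rep of volume class} Proposition and of the isometric isomorphism $H_{cb}^{2n}(G,\mathbb{R})\cong H_{\delta}^{2n}\big(C_b^*(\mathbb{T}^n,\mathbb{R})^G\big)$ from \cite[Corollary 7.5.9.]{monod2001continuous} to general $n$, the inequality $\|M\|\geq \mathrm{Vol}(M)/\|\Theta_n\|_{\infty}$ is automatic, and the conjecture is equivalent to the reverse inequality, i.e.\ to the assertion that $\Theta_n$ is a norm-minimal representative of its class. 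This matches exactly how the paper frames the question (``whether this inequality is actually equality''), so the genuine content is the part your proposal defers.

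And that part has real gaps, not just unfinished bookkeeping. First, your smearing construction presupposes the paper's \emph{first} conjecture: without knowing $\|\Theta_n\|_{\infty}$ and a configuration attaining it, there is no candidate simplex to smear; for $n>3$ nothing is computed, and even for $n=3$ Proposition \ref{Prop: norm of Theta} gives only the norm of this particular cocycle, not its minimality. Second, the pairing identity you assert (``its pairing with $\Theta_n$ should recover $\mathrm{Vol}(M)$'' with total mass $\mathrm{Vol}(M)/\|\Theta_n\|_{\infty}$) is precisely the nontrivial point, not a consequence of formal duality: $\Theta_n$ lives in the boundary model $C_b^*(\mathbb{T}^n,\mathbb{R})^G$ and is evaluated on ideal configurations, whereas a fundamental cycle of $M$ consists of honest singular simplices. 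Relating the two requires a chain-level implementation of the isomorphism of \cite[Corollary 7.5.9.]{monod2001continuous} (a choice of section/basepoint), a straightening of simplices, a limiting argument as vertices go to infinity (ideal simplices are not singular simplices, and the smeared object is a measure cycle, so one also needs the isometry between measure homology and singular homology --- which is a theorem of L\"oh, not of the cited paper \cite{loh2007simplicial}), and a symmetrization over orientation-reversing isometries to make the smeared chain a cycle without losing efficiency. This is exactly where the $n=2$ argument of \cite{bucher2008simplicial} does its hard work, and none of it extends formally to $n\geq 3$. So the proposal establishes nothing beyond what the paper already proves; it is a reasonable research program, with the same open core as the conjecture itself.
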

If these two conjecture are true, we can get the exact value of simplicial volume of all closed oriented manifolds covered by $\big(\mathbb{H}^2\big)^n$.

\end{document}